\documentclass[reqno,oneside]{amsart}
\usepackage[margin=3cm]{geometry}
\usepackage{amssymb,amsmath}
\usepackage{amsfonts}
\usepackage{rotating}
\usepackage[T1]{fontenc}
\usepackage{lmodern}
\usepackage{bbm,bm}
\usepackage{mathtools}
\mathtoolsset{showonlyrefs=true}
\usepackage{graphicx}
\usepackage[boxed,oldcommands,norelsize]{algorithm2e}
\usepackage{hyperref}
\usepackage{subcaption}
\usepackage[numbers,sort&compress]{natbib}
\usepackage{multirow}
\usepackage{hhline}
\usepackage{colortbl}
\newcolumntype{L}{>{\centering\arraybackslash}m{0.46\textwidth}}
\usepackage{arydshln}
\usepackage[nolist]{acronym}

\usepackage{comment}

\newtheorem{lemma}{Lemma}[section]

\newtheorem{definition}{Definition}[section]
\newtheorem{corollary}{Corollary}[section]

\newcommand{\M}{\boldsymbol{\mathsf{M}}}

\newcommand{\sM}{\tilde{\boldsymbol{\mathsf{M}}}}
\newcommand{\oM}{\overline{\boldsymbol{\mathsf{M}}}}

\newcommand{\B}{\boldsymbol{\mathsf{B}}}
\newcommand{\K}{\boldsymbol{\mathsf{K}}}
\newcommand{\sK}{\tilde{\boldsymbol{\mathsf{K}}}}
\newcommand{\oK}{\overline{\boldsymbol{\mathsf{K}}}}
\newcommand{\R}{\boldsymbol{\mathsf{R}}}
\newcommand{\J}{\boldsymbol{\mathsf{J}}}
\newcommand{\0}{\boldsymbol{\mathsf{0}}}
\newcommand{\G}{\boldsymbol{\mathsf{G}}}
\newcommand{\A}{\mathsf{A}}

\newcommand{\U}{\boldsymbol{\mathsf{U}}}

\newcommand{\rr}{\boldsymbol{r}}

\newcommand{\normal}{\boldsymbol{n}}
\newcommand{\x}{\boldsymbol{x}}

\newcommand{\gradient}{\boldsymbol\nabla}
\newcommand{\cc}{\boldsymbol{c}}
\newcommand{\mm}{\boldsymbol{m}}


\newcommand{\half}{\frac{1}{2}}
\def\ij{{ij}} 

\def\i{{\boldsymbol{i}}}

\newcommand{\der}{{\rm d}}
\newcommand{\derd}{\delta}
\newcommand{\dtnp}{\Delta t_{n+1}}
\newcommand{\RR}{\mathbb{R}}

\newcommand{\id}{\mathbb{I}}
\newcommand{\np}{{n+1}}
\newcommand{\nk}{{k,n+1}}
\newcommand{\sr}{\kappa}

\newcommand{\vel}{\boldsymbol{v}}
\newcommand{\conv}{\boldsymbol{f}}

\newcommand{\force}{g}
\newcommand{\bforce}{\boldsymbol{\force}}


\newcommand{\lone}{L^1}
\newcommand{\ltwo}{L^2}


\newcommand{\inflowboundary}{\Gamma_{\rm in}}
\newcommand{\outflowboundary}{\Gamma_{\rm out}}

\newcommand{\domain}{\Omega}

\newcommand{\mesh}{\mathcal{T}_h}

\newcommand{\element}[1][]{K_{#1}}

\newcommand{\nnodes}{N}
\newcommand{\bfespace}{\boldsymbol{V}_h}
\newcommand{\fespace}{V_h}

\newcommand{\nodes}{\mathcal{N}_h}

\newcommand{\neighborhood}[1][]{\mathcal{N}_h(\domain_{#1})}

\newcommand{\symneigh}[1][]{\mathcal{N}^{{#1},\sym}_h}

\newcommand{\contunk}{\boldsymbol{u}}
\newcommand{\unk}{u_h}

\newcommand{\bunk}{\contunk_h}

\newcommand{\test}{v_h}
\newcommand{\btest}{\boldsymbol{v}_h}
\newcommand{\shapef}[1][]{\varphi_{#1}}

\usepackage{stmaryrd}
\newcommand{\smax}{\max{}_{\sigma_h}}

\newcommand{\detector}[1][]{\alpha_{#1}}
\newcommand{\vdetector}[1][]{\boldsymbol{\alpha}_{#1}}

\newcommand{\graphl}{\ell}

\newcommand{\jump}[1]{\left\llbracket #1 \right\rrbracket}
\newcommand{\mean}[1]{%
  \sbox0{%
    \mathsurround=0pt 
    $\left\{\vphantom{#1}\right.\kern-\nulldelimiterspace$%
  }%
  \sbox2{\{}%
  \ifdim\ht0=\ht2
    \{\kern-.625\wd2 \{#1\}\kern-.625\wd2 \}%
  \else
    \left\{\kern-.7\wd0\left\{#1\right\}\kern-.7\wd0\right\}%
  \fi
}

\newcommand{\smthlimit}[1]{Z\left(#1\right)}
\newcommand{\absn}[2][\varepsilon_h]{\left\vert #2 \right\vert_{1,#1}}
\newcommand{\absd}[2][\varepsilon_h]{\left\vert #2 \right\vert_{2,#1}}
\newcommand{\smthdetector}[1][]{ %
	\ifx\empty#1\empty%
	\alpha_{\varepsilon_h} %
	\else %
	\alpha_{\varepsilon_h,#1} %
	\fi}
\newcommand{\vsmthdetector}[1][]{ %
	\ifx\empty#1\empty%
	\boldsymbol{\alpha}_{\varepsilon_h} %
	\else %
	\boldsymbol{\alpha}_{\varepsilon_h,#1} %
	\fi}
\newcommand{\smthspacedetector}[1][]{ %
	\ifx\empty#1\empty%
	\alpha_{\varepsilon_h}^s %
	\else %
	\alpha_{\varepsilon_h,#1}^s %
	\fi}
\newcommand{\smthtimedetector}[1][]{ %
	\ifx\empty#1\empty%
	\alpha_{\varepsilon_h}^t %
	\else %
	\alpha_{\varepsilon_h,#1}^t %
	\fi}
\newcommand{\smthstdetector}[1][]{ %
	\ifx\empty#1\empty%
	\alpha_{\varepsilon_h}^{st} %
	\else %
	\alpha_{\varepsilon_h,#1}^{st} %
	\fi}

\newcommand{\sdetectorAprox}[1][]{ %
 \ifx\empty#1\empty%
    \tilde{\alpha}_{\varepsilon_h} %
 \else %
    \tilde{\alpha}_{\varepsilon_h,#1} %
 \fi}
\newcommand{\artdif}{\nu}
\newcommand{\smthartdif}{\tilde{\nu}}
\newcommand{\sym}{{\rm sym}}
\newcommand{\lambdamax}[1][]{\lambda_{#1}^{\max}}

\graphicspath{{figures/}}

\begin{document}

\begin{acronym}
	\acro{fe}[FE]{finite element}
	\acro{dg}[dG]{discontinuous Galerkin}
	\acro{cg}[cG]{continuous Galerkin}
	\acro{dof}[DOF]{degrees of freedom}
	\acro{ssp}[SSP]{strong stability preserving}
	\acro{rk}[RK]{Runge Kutta}
	\acro{be}[BE]{Backward Euler}
	\acro{dmp}[DMP]{discrete maximum principle}
	\acro{mp}[MP]{maximum principle}
	\acro{afc}[AFC]{algebraic flux correction}
	\acro{fct}[FCT]{flux corrected transport}
	\acro{led}[LED]{local extremum diminishing}
	\acro{dled}[DLED]{discrete local extremum diminishing}
\end{acronym}

\title{On differentiable local bounds preserving stabilization for Euler equations}

\author{ Santiago Badia$^{1,2}$  \and Jes\'us Bonilla$^{2,3}$ \and Sibusiso Mabuza$^{4}$ \and John N. Shadid$^{5,6}$ }
\thanks{
$^1$ School of Mathematics, Monash University, Clayton, Victoria, 3800, Australia. \\ 
\indent$^2$ Centre Internacional de M\`etodes Num\`erics en Enginyeria (CIMNE), Esteve Terradas 5, 08860 Castelldefels, Spain. \\ 
\indent$^3$
Universitat Polit\`ecnica de Catalunya, Jordi Girona 1-3, Edifici C1, 08034
Barcelona, Spain. \\
\indent$^4$ School of Mathematical and Statistical Sciences,
Clemson University, O-110 Martin Hall,
Clemson, SC 29634, USA. \\
\indent$^5$ Center for Computing Research, Sandia National Laboratories, PO Box 5800, Albuquerque NM 87123, USA. \\
\indent$^6$ Department of Mathematics and Statistics, University of New Mexico, MSC01 1115, Albuquerque, NM 87131, USA. \\ 
E-mails:\\ {\tt santiago.badia@monash.edu} (SB), {\tt jbonilla@cimne.upc.edu} (JB), {\tt smabuza@clemson.edu} (SM), {\tt jnshadi@sandia.gov} (JS)
}

\renewcommand{\thefootnote}{\arabic{footnote}}

\maketitle

\begin{abstract} 
This work presents the design 
of nonlinear stabilization techniques for the 
finite element discretization 
of Euler equations in both steady 
and transient form. Implicit time integration 
is used in the case of the transient form. A 
differentiable local bounds preserving method 
has been developed, which combines a Rusanov 
artificial diffusion operator and a differentiable 
shock detector. Nonlinear stabilization schemes 
are usually stiff and highly nonlinear. 
This issue is mitigated by the differentiability
properties of the proposed method. Moreover, 
in order to further improve the nonlinear 
convergence, we also propose a continuation 
method for a subset of the stabilization parameters. 
The resulting method has been successfully 
applied to steady and transient problems with 
complex shock patterns. Numerical experiments 
show that it is able to provide sharp and well 
resolved shocks. The importance of the 
differentiability is assessed by comparing 
the new scheme with its non-differentiable 
counterpart. Numerical experiments suggest that, 
for up to moderate nonlinear tolerances, the method 
exhibits improved robustness and nonlinear 
convergence behavior for steady problems. 
In the case of transient problem, we also 
observe a reduction in the computational cost.
\end{abstract}

\noindent{\bf Keywords:} 
Shock capturing, Euler equations, Hyperbolic systems, Positivity preservation

\tableofcontents

\pagestyle{myheadings}
\thispagestyle{plain}

\section{Introduction}

The solution of many hyperbolic conservation 
laws satisfies a number of 
mathematical and physics constraints. 
These can include, for example, maximum 
principles, positivity and monotonicity 
preservation. A classical example are the 
Euler equations, where positivity must be 
preserved for the density, internal energy, 
and therefore also the pressure. In general, 
discretizations can yield non-physical 
solutions that violate these properties, 
leading to nonlinear instabilities. 
This is a well known issue. In the context 
of explicit finite volume schemes or \ac{dg} 
\ac{fe} methods, several stabilized schemes 
have already been developed \cite{Leveque2002,Toro2009,cockburn_rungekutta_2001}. 
However, explicit time integrators need to 
resolve all time scales for stability reasons. 
For some multiple-time-scale problems, this 
can often imply very stringent stability 
conditions on the time-step size. If the 
fastest time scales are critical to the 
dynamics, and therefore of scientific or 
engineering interest, then explicit time 
integrators are well suited. On the contrary, 
implicit time integration is favored when 
the smallest time-scales are not relevant 
to the dynamics of interest. For example, 
the ability to integrate accurately and 
efficiently for longer time-scale simulations 
can be essential in some plasma physics 
applications \cite{kritz_fusion_2009}. 
Moreover, explicit schemes become inefficient 
for steady problems because one is forced 
to solve all the hydrodynamic evolution 
until the steady state is reached. 
Therefore, the design of 
implicit \textit{stabilized} schemes 
that preserve the previously mentioned 
structure continues to be an important challenge. 

In this work, we focus on 
implicit \ac{cg} \ac{fe} approximations 
of steady and transient shock hydrodynamics problems. 
It is well known that the Galerkin method 
(without any modification) is generally 
unstable for hyperbolic problems and yields 
solutions with spurious oscillations 
\cite{Kuzmin2005,kuzmin_algebraic_2005}. 
Therefore, \ac{fe} schemes are usually 
supplemented with additional artificial 
diffusion terms. Those terms are designed 
such that the resulting scheme satisfies 
the properties of the continuous problem. 
For example, positive 
density and internal energy or non-decreasing entropy. 
Developing a numerical scheme 
that preserves these properties is very challenging. 
This becomes especially complex for nonlinear 
hyperbolic systems.
A number of methods that preserve the 
continuous problem properties have been 
proposed. In the explicit finite 
difference and finite volume methods contexts, 
schemes in \cite{Hoff1979,Hoff1985,Frid2001} 
preserve these properties for 
Euler and the p-system. Recently, Guermond 
and Popov \cite{Guermond2015} have extended 
these methods to explicit \ac{cg} \ac{fe} schemes. 
Their result is applicable to any first order 
hyperbolic system with bounded wave 
propagation speed. 
In \cite{Guermond2018}, Guermond et al. 
improved their previous scheme to recover 
second order convergence. Moreover, they 
generalized it to different discretization approaches. 
However, this scheme is limited to explicit 
time integration. More recently, Kuzmin 
\cite{Kuzmin2020} has extended this method to 
monolithic convex limiting, which allow the 
usage of implicit time integrators.
An alternative is to impose conditions 
based on the diagonalization of the problem. 
Since it is a hyperbolic system, 
then there exists a set of \emph{characteristic} 
variables for which the system can be 
locally diagonalized and written as a set of 
independent transport problems. At this point, 
one can use techniques developed for scalar 
problems. Hence, the stabilization methods 
are based on adapting the scalar techniques 
for characteristic variables to the system 
written in the original set of variables. 
Following this strategy, some progress has 
been recently made in stabilized \ac{fe} 
schemes by making use of \ac{fct} algorithms 
\cite{Kuzmin2012,Lohmann2016,Mabuza2018,Mabuza2019}. 
The schemes proposed therein are based 
on two main ingredients. On the one hand, 
a diffusive term able to minimize or eliminate 
any oscillatory behavior. On the other hand, 
a limiter, or shock detector, to modulate 
the stabilization term and restrict its 
action to the vicinity of shocks.

With this work focusing on steady and
transient Euler equations, with implicit
time-stepping in the transient case, our
scheme is fully implicit. The nonlinearity
of the Euler equations results in large
nonlinear system of equations requiring
a robust nonlinear solver. We will present
and study a nonlinear solver by looking 
at its nonlinear convergence in addition to
some standard validation.
It is known that for certain limiter choices 
the convergence of the nonlinear solver 
might be remarkably hard 
\cite{kuzmin_linearity-preserving_2012}. 
Recent progress has been made to improve the
convergence of the nonlinear solver
for scalar convection-diffusion problems 
\cite{badia_monotonicity-preserving_2017,
badia_differentiable_2017,Bonilla2019a}. 
In these studies, the authors were able 
to improve the nonlinear convergence by 
proposing differentiable stabilization 
terms to improve convergence rates of the 
Newton iterative nonlinear solver. 
In this work, we extend the 
differentiable nonlinear stabilization in 
\cite{badia_monotonicity-preserving_2017} 
to the Euler equations using the ideas from 
\cite{Kuzmin2012,Mabuza2018} to define the 
artificial diffusion operators for hyperbolic 
systems. The new method is applied to the 
steady and transient Euler equations, 
and its nonlinear convergence is assessed. 
The method proposed in the present work has 
been implemented and tested using 
the \texttt{FEMPAR} library 
\cite{badia_fempar:_2017,Badia2020}.
 
The paper is structured as follows. 
In Sect.~\ref{sec.preliminaries} we 
present the CG discretization for 
Euler equations.
Sect.~\ref{sec.stabilization} is 
devoted to the definition of the 
stabilization terms. We describe 
the nonlinear solvers used in 
Sect.~\ref{sec.solver}. Then, in 
Sect.~\ref{sec.experiments} we present the 
numerical experiments performed. 
Finally, we draw some conclusions in 
Sect.~\ref{sec.conclusions}.

\section{Preliminaries}\label{sec.preliminaries}
In this section, we introduce the continuous 
problem and its \ac{fe} discretization. At the end of the 
section, the basic ingredients for desired 
stabilized schemes for hyperbolic systems are presented.

\subsection{Continuous problem}

Consider an open, bounded, and connected domain, 
$\domain\subset\RR^d$, where $d$ is the number of 
spatial dimensions. Let $\partial\domain$ be the Lipschitz continuous boundary of $\domain$. A first order hyperbolic problem can be written in conservative form as
\begin{equation}\label{eq.continuous-problem}
\arraycolsep=1.4pt\def\arraystretch{1.1}
\left\{\begin{array}{rcll}
\partial_t \contunk + \gradient\cdot\conv(\contunk) & = & 
\bforce, & \text{in } \domain\times(0,T], \\
\contunk(x,0) & = & \contunk_0(x), &  x\in\domain,
\end{array}
\right.
\end{equation}
where $\contunk = \{ u^\beta \}_{\beta=1}^m$ 
are $m\geq 1$ conserved variables, 
$\conv$ is the physical flux, 
$\contunk_0$ are the given initial conditions, 
and $\bforce(x,t)$ is a function defining the body 
forces. Note that the flux, 
$\conv : \RR^m \to \RR^{m\times d}$,  
has components $\conv = \{\conv_i\}_{i=1}^d$, where 
$\conv_i : \RR^m \to \RR^m$ is the flux in 
the $i$th spatial direction. 
Note that if $m=1$, and $\conv(u) \doteq \vel(x,t) u$ 
with $\vel(x,t)$ a divergence-free convection 
field, we recover the well known linear scalar 
convection problem. For Euler equations we
have $m=d+2$ and
\begin{equation}\label{eq.eulervars}
\contunk\doteq \left(\begin{array}{c}
\rho \\ \boldsymbol{m} \\ \rho E
\end{array}\right),\quad \conv\doteq \left(\begin{array}{c}
\boldsymbol{m} \\ \boldsymbol{m}\otimes\vel + p \id \\ \vel(\rho E+p)
\end{array}\right),\, \  \hbox{and} \ \ \bforce\doteq\left(\begin{array}{c}
0 \\ \boldsymbol{b} \\ \boldsymbol{b}\cdot\vel + r
\end{array}\right),
\end{equation}
where $\rho$ is the density, $\rho E$ is the 
total energy, $p$ is the pressure, 
$\boldsymbol{m} = \{{m}_1,\ldots,{m}_d\}$, 
where $m_i = \rho v_i$, is the momentum, 
$\vel = \{v_1,\ldots,v_d\}$ is the velocity, 
$\boldsymbol{b} = \{{b}_1,\ldots,{b}_d\}$ 
are the body forces, $r$ is an energy 
source term per unit mass, and $\mathbb{I}$ 
an identity matrix of dimension $d$. In 
addition, the system is equipped with 
the ideal gas equation of state 
$p=(\gamma-1) \rho e$, where 
$\rho e= \rho E-\half \rho\|\vel\|^2$ is the 
internal energy, and $\gamma$ is the 
adiabatic index. 
We will also consider the steady 
counterpart of \eqref{eq.continuous-problem}, 
which is obtained by dropping the time 
derivative term and the initial conditions.

Boundary conditions for this system may be imposed
strongly or numerically, see 
\cite{Feistauer2003,Toro2009,Gurris2009} 
for details. Here we will present one
approach that makes use of the 
eigensystem of the Euler equations
to define inlet and outlet boundaries.
Denote by $\conv^\prime:\RR^m 
\to \RR^{m\times m\times d}$ 
the flux Jacobian. Let $\normal\in\RR^d$ be 
any direction vector. Since the system is 
hyperbolic the flux Jacobian in any direction 
is diagonalizable and has only real eigenvalues, i.e. $\conv^\prime(\contunk)\cdot\normal=\sum_{i=1}^{d}
\conv_i^\prime(\contunk)n_i $ is diagonalizable 
with real eigenvalues, say $\{\lambda_\beta\}_{\beta=1}^m$. 
These eigenvalues might have different 
multiplicities, and different signs. Hence, 
for a given direction, $\normal$, each 
characteristic variable might be convected 
forward (along $\normal$) or backwards 
(along $-\normal$). Therefore, it is convenient 
to define inflow and outflow boundaries for 
each component. The inflow boundary for 
component $\beta$ is defined as $\inflowboundary^\beta\doteq\{\x\in\partial\domain \ :\ \lambda_\beta(\conv^\prime(\contunk)\cdot
\normal_{{\partial\domain}}) \leq 0\}$, 
where $\normal_{\partial\domain}$ is the 
unit outward normal to the boundary, and 
$\lambda_\beta$ is the $\beta$th-eigenvalue 
of the flux Jacobian. We define the outflow 
boundary as $\outflowboundary^\beta\doteq\partial\domain
\backslash\inflowboundary^\beta$.
From this, we can define some inflow boundary
conditions as follows
$$
u^\beta(x,t) =  \bar{u}^\beta(x,t), \; \text{on } 
\inflowboundary^\beta\times(0,T],\; \beta = 1,...,m,
$$
where $\bar{u}^\beta(x,t)$ are the boundary values for the $\beta$th-component of $\contunk$.

\subsection{Discretization}
Let $\mesh$ be a conforming partition of $\domain$.
The set of nodes of the mesh $\mesh$ 
is denoted by $\nodes$. For every node $i \in \nodes$, 
the nodal coordinates are given by $\x_i$. 
We denote by $\nnodes=\mathrm{card}(\nodes)$ 
the total number of nodes. The set of nodes 
belonging to a particular element 
$\element \in \mesh$ is denoted by 
$\nodes(\element)\doteq\{i\in\nodes : \x_i\in \element\}$. 
Moreover, $\domain_i$ is the macroelement 
made up of elements that 
contain node $i$, i.e., $\domain_i\doteq 
\bigcup_{\element\in\mesh, \; \x_i\in \element}  
\element$. To simplify the discussion below, 
we use $i$ for both the node and its associated index.

We will make use of linear/bilinear finite elements.
Define $\bfespace = (V_h)^m$, where for simplex meshes,
$V_h \doteq \big\{v_h\in\mathcal{C}^0
(\domain) : v_h|_K\in 
\mathcal{P}_1(\element) \ \forall K\in\mesh\big\}$, 
where $m$ is the number of components of $\contunk$, 
and $\mathcal{P}_1(\domain)$ is the space of 
polynomials of total degree less than or equal 
to one. For $d$-cube partitions, 
$V_h \doteq \big \{v_h\in \mathcal{C}^0(\domain) : 
v_h|_K\in {Q}_1(\element), \forall K\in\mesh\big\}$, 
where $Q_1(\element)$ is space of polynomials 
of partial degree less than or equal to one. 
Any function $\btest\in\bfespace$ is 
a linear combination of the basis 
$\{\shapef[i]\}_{i\in\nodes}$ with nodal 
values $\boldsymbol{v}_i$, where $\shapef[i]$ 
is the shape function associated to the node $i$. Hence, $\btest=\sum_{i\in\nodes}\shapef[i]\boldsymbol{v}_i$.

We use standard notation for Lebesgue spaces. 
The $\ltwo(\omega)$ scalar product is denoted 
by $(\cdot,\cdot)_\omega$ for any set $\omega$. 
However, we omit the subscript for 
$\omega\equiv\domain$. The $\ltwo$ norm is 
denoted by $\Vert\cdot\Vert$. 

The scheme will be based on the method of lines.
First, we present the spatial discretization.
The solution is approximated in space 
using the \ac{fe} spaces defined above, i.e., 
$\contunk\approx\bunk=\sum_{i\in\nodes} 
\shapef[i]\contunk_i$.
In addition, we use the group-FEM technique to 
approximate the fluxes \cite{Fletcher1983,Mabuza2018,Kuzmin2012,Barrenechea2017}. 
Thus, the fluxes are approximated as follows 
$\conv(\contunk_h) \approx \conv_h(\contunk_h) 
:= \sum_{i\in\nodes} \shapef[i]\conv(\contunk_i)$.
To get the semi-discrete scheme, we test the strong form against $\btest\in\bfespace$. Afterwards, we apply integration by parts in the convective term.
Therefore, the semi-discrete problem is given as follows:
find $\bunk\in\bfespace$, with
$\bunk=\boldsymbol{u}_{0h}$ at $t=0$, such that
\begin{equation} \label{discrete-problem}
(\partial_t\bunk,\btest) - 
(\conv_h (\bunk),\gradient\btest) 
+(\normal_{\partial\domain}
\cdot\conv_h (\bunk),\btest)_{\partial\domain} 
= (\bforce,\btest),\; \mbox{for all } \btest\in\bfespace,
\end{equation}
where 
$\contunk_{0h}$ is an LED projection of
$\contunk_0$ onto $\bfespace$ \cite{Kuzmin2012}. 
Note that for the sake of 
simplicity boundary conditions are 
strongly imposed on the conserved variables. 
This procedure implies that the number 
of unknowns in the discrete system is reduced 
after prescribing boundary conditions. Therefore, 
the equations corresponding to prescribed values
are also removed from the algebraic system.
This practice is suitable only for 
simple problems. Transonic 
flow and other complex problems may require 
fairly complex boundary conditions. 
These may include numerical flux boundary 
conditions \cite{Gurris2009}.

For the time discretization, 
we only consider the \ac{be} scheme. 
Other time integrators may be considered,
such as the \ac{ssp} \ac{rk} 
methods (see \cite{Gottlieb2001}). 
We will now discretize the semi-discrete
scheme in time to get fully discrete scheme.
Consider a partition of the time 
domain $(0,T]$ into $n^{ts}$ sub-intervals 
of length $\Delta t_{n+1} = t^{n+1}-t^n$. 
Then, at every time step $n = 0,\ldots,n^{ts}-1$, 
the fully discrete scheme is
\begin{equation}\label{eq.discrete-problem}
\M \derd_t \U^{n+1} + \K(\bunk^\np)\U^{n+1} = \G,
\end{equation}
where $\U^{n+1}\doteq [\contunk_1^{n+1},...,\contunk_{\nnodes}^{n+1}]^T$ is the vector of nodal values at time $t^{n+1}$, and $\derd_t (\U) \doteq \dtnp^{-1} (\U^{n+1} - \U^n)$. 
$\M$ and $\K$ are block matrices. $\G$ is a block vector. They are
given by
\begin{align}
\M &= \{ \M_{ij} \}_{i,j=1}^N,\\
\K &= \{ \K_{ij} \}_{i,j=1}^N,\\
\G &= \{ \G_{i} \}_{i=1}^N.
\end{align}
Each $\M_{ij}$ is an $m\times m$ matrix. So is $\K_{ij}$.
$\G_{i}$ is an $m\times 1$ vector.
These block matrix and vector entries are given for 
any $i,j\in\nodes$ by\\

\begin{tabular}{ll}
$\M_\ij = \{ \M_\ij^{\beta\gamma} \}_{\beta,\gamma=1}^m,$
&
$\M_\ij^{\beta\gamma} \doteq(\shapef[j],\shapef[i]) \delta_{\beta\gamma},$\\
$\K_\ij = \{ \K_\ij^{\beta\gamma} \}_{\beta,\gamma=1}^m,$
&
$\K_\ij^{\beta\gamma} \doteq -(\shapef[j]\delta_{\beta\xi},\conv^{\prime}_k (\contunk_j^\np)^{\xi\eta}\cdot\partial_k \shapef[i]\delta_{\eta\gamma}) + (\shapef[j]\delta_{\beta\xi},n_k\cdot\conv^\prime_k(\contunk_j^\np)^{\xi\eta} \shapef[i]\delta_{\eta\gamma})_{\partial\domain},$\\
$\G_i = \{ \G_i^\beta \}_{\beta=1}^m,$ 
&
$\G_i^\beta \doteq(\bforce^\beta,\shapef[i]),$
\end{tabular}\\
where Einstein summation applies over $k$, $\xi$ and $\eta$. $\beta,\gamma,\xi,\eta \in  \{1,\ldots,m\}$ are the component indices, and $\delta_{\beta\gamma}$ is the Kronecker delta. Notice that we have rearranged the terms in $\K_\ij^{\beta\gamma}$ using the fact that the Euler flux is homogeneous of degree one, i.e., $\conv (\contunk_i) = \conv' (\contunk_i)\contunk_i$.

\subsection{Stabilization properties}\label{sec.stability}
In this section, we introduce some concepts 
required for discussing the stabilization 
presented in section \ref{sec.stabilization}. 
Stabilization for hyperbolic systems is developed
from techniques designed for scalar equations. 
We briefly review below the ideas that govern
stabilization for scalar problems.

\begin{definition}[Local Discrete Extremum]\label{def.extremum}
The function $\test\in\fespace$ has a local 
discrete minimum (resp. maximum) on 
$i\in\nodes$ if $u_i\leq u_j$ (resp. $u_i\geq u_j$) 
${\forall j\in\neighborhood[i]}$.
\end{definition}
\begin{definition}[Local \ac{dmp}]\label{def.local-dmp}
A solution $\unk\in\fespace$ satisfies the local 
discrete maximum principle if for every $i\in\nodes$
\begin{equation}
\min_{j\in\neighborhood[i]\backslash\{i\}} u_j \leq u_i \leq 
\max_{j\in\neighborhood[i]\backslash\{i\}} u_j.
\end{equation}
\end{definition}
\begin{definition}[LED]\label{def.led}
A scheme is local extremum diminishing if, for 
every $u_i$ that is a local discrete maximum (resp. minimum),
\begin{equation}
\frac{\der u_i}{\der t} \leq 0, \qquad \qquad 
\left(resp.\ \frac{\der u_i}{\der t} \geq 0 \right),
\end{equation}
is satisfied.
\end{definition}

Given a scalar problem with the forcing term $\bm g \equiv 0$,
a semi-discrete scheme 
may be written 
as 
\begin{equation}
\sum_j (\M_\ij \frac{du_j}{dt} + \A_\ij u_j) = 0,
\end{equation}
plus appropriate boundary conditions.
One possible strategy for satisfying the above 
properties consists of designing a scheme 
that yields a positive diagonal mass matrix $\M_{ij} = m_i\delta_\ij$,
where $m_i > 0$, and a matrix $\A$ that satisfies 
\begin{equation}\label{eq.m-matrix}
\sum_j \A_\ij = 0, \quad\text{and}\quad \A_\ij \leq  0, \ i\neq j. 
\end{equation}
In this case, it is possible to rewrite the system as
\begin{equation}\label{eq.general-led-problem}
m_i \frac{d u_i}{d t}+ \sum_{j\in\neighborhood[i]\backslash\{i\}} \A_\ij (u_j - u_i) = 0, \quad \forall \, i\in\nodes.
\end{equation}
As shown in \cite{codina_discontinuity-capturing_1993} and \cite{kuzmin_flux_2002}, such a scheme satisfies the local \ac{dmp} for steady problems and it is also \ac{led} when applied to transient problems. 

The extension of these properties to hyperbolic systems is based on analyzing them in characteristic variables.
Let us consider a one-dimensional linear hyperbolic system with a constant Jacobian flux, $\conv^\prime$. In this particular case, the continuous system can be diagonalized. Thus it is possible to discretize and solve for the characteristic variables. For example, for  the set of characteristics variables, say $W$, the continuous system reads: 
\begin{equation}\label{eq.diagonal-system}
\partial_t W + \Lambda \partial_x W = 0,
\end{equation}
where $\Lambda=diag(\lambda_1,...,\lambda_m)$ is a diagonal $m$ by $m$ matrix.
At this point, one can see the system as a set of independent scalar transport problems.
Thus, it leads to a system with diagonal blocks after discretizing it with \acp{fe}. 

Assuming conditions \eqref{eq.m-matrix} are satisfied for every component of problem \eqref{eq.diagonal-system}, then the scheme will be \ac{led} for each characteristic variable. 
Notice that this is equivalent to forcing the original (coupled) \ac{fe} approximation to have negative semi-definite off-diagonal blocks. 
That is, the \ac{fe} discretization of the problem in characteristic variables reads
\begin{equation}\label{eq.fe-diagonal-system}
(\shapef[j],\shapef[i]) \partial_t W_j + \Lambda (\partial_x\shapef[j],\shapef[i])W_j = 0.
\end{equation}
Since in this case it is a one dimensional linear problem, we can recover the original problem using the fact that $W=R^{-1}U$, and $\conv^\prime = R\Lambda R^{-1}$. Multiplying \eqref{eq.fe-diagonal-system} at the left by $R$,
\begin{equation}
(\shapef[j],\shapef[i]) \partial_t R R^{-1}U_j + R\Lambda (\partial_x\shapef[j],\shapef[i])R^{-1} U_j = 0.
\end{equation}
In this case, $(\partial_x\shapef[j],\shapef[i])$ is simply a scalar value. Hence, we are able to recover the original (coupled) problem \ac{fe} discretization.
\begin{equation}
(\shapef[j],\shapef[i]) \partial_t U_j + \conv^\prime (\partial_x\shapef[j],\shapef[i]) U_j = 0.
\end{equation}
Thus, if $\conv^\prime (\partial_x\shapef[j],\shapef[i])$ is negative semi-definite for $j\neq i$, then the problem in characteristic variables will satisfy conditions \eqref{eq.m-matrix} for each variable.

In the case of more general multidimensional 
problems (e.g. Euler equations), this would 
\emph{only} imply that the scheme is \ac{led} 
for a certain set of \emph{local} characteristic 
variables. Furthermore, if the flux Jacobian 
$\conv^\prime$ is not linear, then even the 
definition of the matrix $\A_\ij$ 
(relating nodes $i$ and $j$) is not trivial. 
Let us recall the definition of these blocks 
for Euler equations
\begin{equation}
\K_\ij \doteq -(\shapef[j],\conv^{\prime}_k (\contunk_j) \cdot\partial_k \shapef[i]) + (\shapef[j],n_k\cdot\conv^\prime_k(\contunk_j) \shapef[i])_{\partial\Omega} = \conv^{\prime}_k (\contunk_j) \cdot(\partial_k\shapef[j], \shapef[i]) ,
\end{equation}
where we have undone integration by parts. It is easy to check that $\sum_j (\partial_k\shapef[j], \shapef[i]) = 0$. Hence, we can write
\begin{equation}
\sum_{j\in\neighborhood[i]} \K_\ij \contunk_j = 
\sum_{j\in\neighborhood[i]\backslash\{i\}} (\partial_k\shapef[j], \shapef[i]) (\conv^{\prime}_k (\contunk_j) \cdot \contunk_j - \conv^{\prime}_k (\contunk_i) \cdot\contunk_i).
\end{equation}
As previously stated, it is not straightforward in the case of Euler equations to rewrite the discrete problem in the form of \eqref{eq.general-led-problem}. However, making use of special density-averaged variables it is possible to rewrite the previous expression as
\begin{equation}
\sum_{j\in\neighborhood[i]} \K_\ij \contunk_j = 
\sum_{j\in\neighborhood[i]\backslash\{i\}} \conv^{\prime}_k (\contunk_\ij)\cdot (\partial_k\shapef[j], \shapef[i]) (\contunk_j - \contunk_i),
\end{equation}
where $\contunk_\ij$ are the Roe mean values \cite{Roe1981}. For an ideal gas, these are defined as
\begin{equation}
\rho_\ij = \sqrt{\rho_i\rho_j},\quad
\mm_\ij = \frac{\mm_i\sqrt{\rho_j}+\mm_j\sqrt{\rho_i}}{\sqrt{\rho_i}+\sqrt{\rho_j}}, \quad
(\rho E)_\ij = \frac{1}{\gamma} \left(\rho_\ij H_\ij + (\gamma-1)\frac{|\mm_\ij|^2}{2\rho_\ij}\right),
\end{equation}
where $H_\ij$ is the average enthalpy
\begin{equation}
H_\ij = \frac{H_i\sqrt{\rho_i}+H_j\sqrt{\rho_j}}{\sqrt{\rho_i}+\sqrt{\rho_j}},\quad\text{and}\quad H_i= \frac{ {\rho E}_i+p_i}{\rho_i}.
\end{equation}
Therefore, using this density-averaged variables it is possible to rewrite Euler problem in the form of \eqref{eq.general-led-problem}. Hence, if $- \conv^{\prime}_k (\contunk_\ij)\cdot (\partial_k\shapef[j], \shapef[i])$ has non-positive eigenvalues, then the scheme will be \ac{led} for a certain set of \emph{local} characteristic variables.
Schemes that satisfy this property are named \emph{local bounds preserving schemes} in the literature \cite{Mabuza2018}. 
This reasoning above motivated the definition of the \ac{led} \emph{principle} for hyperbolic systems of equations by Kuzmin \cite{Kuzmin2005} and coworkers.
Adapted from this principle, we define local bounds preserving schemes as follows.
\begin{definition}\label{def.system-led}
	The semi-discrete scheme
	\begin{equation}
	m_i\partial_t \contunk_i + \sum_{j\neq i} \A_\ij (\contunk_j - \contunk_i ) = \boldsymbol{0}
	\end{equation}
	is said to be local bounds preserving if $\M$ is diagonal with positive entries (i.e. $\M_\ij=m_i\delta_\ij I_{m\times m}$), $\A_\ij$ has non-positive eigenvalues for every $j\neq i$, and $\sum_j\A_\ij=\boldsymbol{0}$.
\end{definition}
Unfortunately, to the best of our knowledge, 
satisfying this definition does not guarantee 
positivity for density or internal energy. It also
does not guarantee non-decreasing entropy. In any case, 
numerical schemes based on this definition 
have shown good numerical behavior \cite{Kuzmin2003,Kuzmin2005,Lohmann2016,Mabuza2018}. 
In the stabilization presented in these papers,
conservative artificial diffusion is used.
There are different types of artificial diffusion.
The simplest is scalar artificial diffusion which is based 
on the spectral radius of 
$\A_\ij$ \cite{Lohner2004,Kuzmin2005}. 
This diffusion is also called Rusanov artificial diffusion, 
since for linear \acp{fe} or finite volume 
methods in one dimension, 
the scheme is equivalent to the Rusanov approximate 
Riemann solver \cite{Kuzmin2005,Toro2009}.
Without any special treatment, adding artificial
diffusion results in a first order accurate 
scheme. The key to recovering high-order 
convergence is to modulate the action of 
the artificial diffusion term, and restrict 
its action to the vicinity of discontinuities. 
Here, we construct a stabilization term using 
Rusanov artificial diffusion and a 
differentiable shock detector recently 
developed for scalar problems \cite{badia_monotonicity-preserving_2017,Bonilla2019a}.
This will be discussed extensively in the next section.

\section{Nonlinear stabilization}\label{sec.stabilization}

In the previous section, we mentioned that 
the Galerkin \ac{fe} discretization yields 
oscillatory solutions in regions around 
discontinuities. We supplement the original 
scheme with an artificial diffusion term to 
stabilize it and mitigate these oscillations. 
The proposed stabilization term is given by
\begin{equation}\label{eq.stabilization}
B_h(\boldsymbol{w}_h;\bunk,\btest)\doteq \sum_{\element[e]\in\mesh}\sum_{i,j \in \nodes(\element[e])} \artdif^e_\ij(\boldsymbol{w}_h)\graphl(i,j) \boldsymbol{v}_i \cdot  I_{m\times m} \boldsymbol{u}_j,
\end{equation}
for any $\bunk\in\bfespace$ and $\btest\in\bfespace$. Here, $\graphl(i,j)\doteq2\delta_\ij-1$ is a graph Laplacian operator defined in \cite{badia_monotonicity-preserving_2017}, and $\artdif^e_\ij(\boldsymbol{w}_h)$ is the element-wise artificial diffusion defined as
\begin{equation}\label{eq.artificial-diffusion}
\begin{aligned}
\artdif^e_\ij(\boldsymbol{w}_h) &  \doteq\max\left(\vdetector[i](\boldsymbol{w}_h)\lambdamax[ij],\vdetector[j](\boldsymbol{w}_h)\lambdamax[ji]\right), \quad  
\text{for } j\in\neighborhood[i]\backslash\{i\}, \\
\artdif^e_{ii}(\boldsymbol{w}_h)&\doteq\displaystyle \sum_{j\in\neighborhood[i]\backslash\{i\}} \artdif^e_\ij(\boldsymbol{w}_h),\quad
\end{aligned}
\end{equation}
where $\lambdamax[ij]$ is the spectral radius of the elemental convection matrix relating nodes $i,j\in\nodes$. We will also denote the spectral radius by $\sr(\cdot)$, i.e., $\sr\left(\conv^\prime(\contunk_\ij)\cdot(\gradient\shapef[j],\shapef[i])_{\element[e]}\right)$. As previously introduced, this artificial diffusion term is based on Rusanov scalar diffusion \cite{Kuzmin2012}. It is important to mention that the eigenvalues of these matrices can be easily computed as
\begin{equation}\label{eq.eigenvalues}
\lambda_{1,..,d}=\vel_\ij \cdot \cc_\ij^e,\quad \lambda_{d+1}=\vel_\ij\cdot \cc_\ij^e - c\|\cc_\ij^e\|, \quad \lambda_{d+2}=\vel_\ij\cdot \cc_\ij^e + c\|\cc_\ij^e\|
\end{equation}
where 
\begin{equation}
\cc_\ij^e = (\gradient\shapef[j],\shapef[i])_{\element[e]}, \quad \text{and}\quad c = \sqrt{(\gamma -1)\left(H_\ij - \frac{|\mm_\ij|^2}{2\rho_\ij^2}\right)}.
\end{equation}

We denote by $\vdetector[i](\boldsymbol{w}_h)$ the shock detector used for modulating the action of the artificial diffusion term. The idea behind the definition of this detector is minimizing the amount of artificial diffusion introduced while stabilizing any oscillatory behavior. In regions where the local \ac{dmp} (see Def. \ref{def.local-dmp}) is not satisfied for any chosen set of components, we ensure that Def. \ref{def.system-led} is satisfied. $\vdetector[i](\boldsymbol{w}_h)$ must be a positive real number which takes value 1 when $\unk(\x_i)$ is an inadmissible value of $\bunk$, and smaller than 1 otherwise. To this end, we define 
\begin{equation}\label{eq.system-detector}
\vdetector[i](\bunk)\doteq \textstyle\max \{\detector[i](\unk^\beta)\}_{\beta\in C},
\end{equation}
where $C$ is the set of components that are used to detect inadmissible values of $\bunk$, e.g. density and total energy in the case of Euler equations. For simplicity, we restrict ourselves to the components of $\bunk$. However, derived quantities such as the pressure or internal energy can be also used.

In order to introduce the shock detector, let us recall some useful notation from \cite{badia_monotonicity-preserving_2017}. Let $\rr_{\ij} = \x_j - \x_i$ be the vector pointing from node $\x_i$ to $\x_j$ with $i,j\in\nodes$ and $\hat{\rr}_{\ij} \doteq \frac{\rr_{\ij}}{|\rr_{\ij}|}$. Recall that the set of points $\x_j$ for $j\in\neighborhood[i]\backslash\{i\}$ define the macroelement $\domain_i$ around node $\x_i$. Let $\x_\ij^\sym$ be the point at the intersection between $\partial\domain_i$ and the line that passes through $\x_i$ and $\x_j$ that is not $\x_j$ (see Fig. \ref{fig.usym}). The set of all $\x_\ij^\sym$ for all $j\in\neighborhood[i]\backslash\{i\}$ is represented with $\symneigh[i]$. We define $\rr_\ij^\sym \doteq \x_\ij^\sym - \x_i$. Given $\x^\sym_\ij$ in two dimensions, let us call $a$ and $b$ the indices of the vertices such that they define the edge in $\partial\domain_\i$ that contains $\x_\ij^\sym$. We define $\contunk_j^\sym$ as the value of $\bunk$ at $\x_\ij^\sym$, i.e. $\bunk(\x_\ij^\sym)$.
\begin{figure}[h]
\centering
\includegraphics[width=0.27\textwidth]{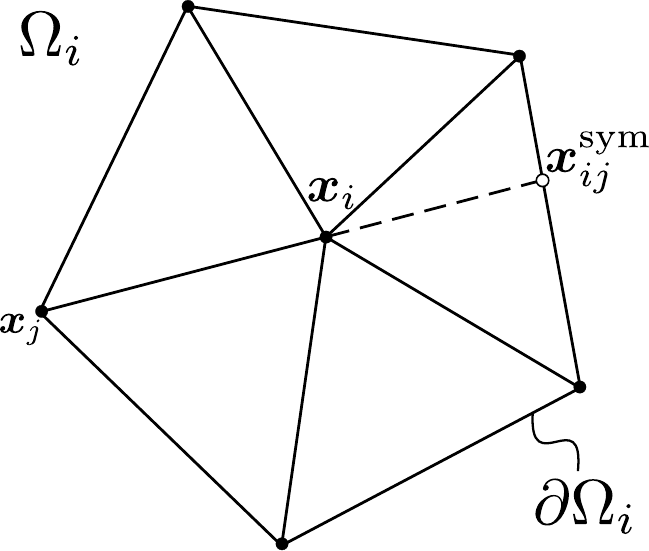}
\caption{$u^\sym$ drawing}
\label{fig.usym}
\end{figure}

Both $\contunk_\ij^\sym$ and $\x_\ij^\sym$ are only required to construct a linearity preserving shock detector. Let us define the jump and the mean of a linear approximation of component $\beta$ of the unknown gradient at node $\x_i$ in direction $\rr_\ij$ as
\begin{equation}\label{eq.jump}
\jump{\gradient \unk^\beta}_\ij \doteq \frac{u_j^\beta - u_i^\beta}{|\rr_\ij|} + \frac{u_j^{\sym,\beta} - u^\beta_i}{|\rr_\ij^\sym|}, 
\end{equation}
\begin{equation}\label{eq.mean}
\mean{|\gradient \unk^\beta\cdot \hat{\rr}_\ij|}_{\ij} \doteq \frac{1}{2}
\left(\frac{|u_j^\beta-u_i^\beta|}{|\rr_\ij|}+\frac{|u_j^{\sym,\beta}-u_i^\beta|}{|\rr_\ij^\sym|}\right).
\end{equation}
In the present work, for each component in $C$, we use the same shock detector developed in \cite{badia_monotonicity-preserving_2017}. Let us recall its definition
\begin{equation}\label{eq.detector}
\detector[i](\unk^\beta)\doteq\left\{\begin{array}{cc}
\left[\dfrac{\left|\sum_{j\in\neighborhood[i]} \jump{\gradient\unk^\beta}_\ij\right|}{\sum_{j\in\neighborhood[i]}2\mean{\left|\gradient\unk^\beta\cdot\hat{\rr}_\ij\right|}_\ij}\right]^q & \text{if}\quad \sum_{j\in\neighborhood[i]} \mean{\left|\gradient\unk^\beta\cdot\hat{\rr}_\ij\right|}_\ij \neq 0 \\
0 & \text{otherwise}
\end{array}
\right. .
\end{equation}
From \cite[Lm. 3.1]{badia_monotonicity-preserving_2017} we know that \eqref{eq.detector} is valued between 0 and 1, and it is only equal to one if $\unk^\beta(\x_i)$ is a local discrete extremum (in a space-time sense as in Def. \ref{def.extremum}).
Since the linear approximations of the unknown gradients are exact for $\unk^\beta\in\mathcal{P}_1$, the shock detector vanishes when the solution is linear. Thus, it is also linearly preserving for every component in $C$. This result follows directly from \cite[Th. 4.5]{badia_monotonicity-preserving_2017}.

The final stabilized problem in matrix form reads as follows. Find $\bunk\in\bfespace$ such that $\unk^\beta=\bar{u}^\beta_h$ on $\inflowboundary^\beta$, $\bunk=\boldsymbol{u}_{0h}$ at $t=0$, and
\begin{equation}\label{eq.stabilized-matrix-problem}
\oM(\bunk^\np)\derd_t\U^\np+\oK(\bunk^\np)\U^\np = \G
\end{equation}
for $n=1,...,n^{ts}$, where 
\begin{equation}
 \oM_\ij(\bunk^\np)\doteq \left[1-\max\left(\vdetector[i],\vdetector[j]\right)\right](\shapef[j],\shapef[i])I_{m\times m} + \max\left(\vdetector[i],\vdetector[j]\right)(\delta_\ij,\shapef[i])I_{m\times m},
\end{equation}
$\oK_\ij(\bunk^\np) \doteq \K_\ij + \B_\ij$, and $\B_\ij(\bunk)\doteq \sum_{\element[e]\in\mesh}\artdif^e_{\ij}(\bunk)\ \graphl(i,j)  I_{m\times m}$.

\begin{lemma}[Local bounds preservation]\label{thm-discreteLED}
	Consider $\bunk\in\bfespace$ with component $\beta$ in the set of tracked variables $C$. The stabilized problem \eqref{eq.stabilized-matrix-problem}, with $\G= \0$, is local bounds preserving as defined in Def. \ref{def.system-led} at any region where $\unk^\beta$ has extreme values.
\end{lemma}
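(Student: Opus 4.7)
The plan is to verify the three conditions of Definition~\ref{def.system-led} at any node $i$ at which $\unk^\beta(\x_i)$ is a local discrete extremum for some $\beta\in C$. The single fact that drives everything is the shock-detector property recalled right after \eqref{eq.detector}: by \cite[Lm.~3.1]{badia_monotonicity-preserving_2017}, $\detector[i](\unk^\beta)=1$ at such an extremum, and since $\vdetector[i]=\max_{\beta\in C}\detector[i](\unk^\beta)$ one gets $\vdetector[i]=1$, whence $\max(\vdetector[i],\vdetector[j])=1$ for every $j\in\neighborhood[i]$.

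With $\max(\vdetector[i],\vdetector[j])=1$, inspection of the definition of $\oM_\ij$ shows that its first term vanishes and its second collapses to $m_i\,I_{m\times m}\delta_\ij$ with $m_i=(1,\shapef[i])>0$, so the mass matrix is diagonal with positive entries along row $i$. For the convective-plus-diffusive block I would reuse the manipulation the authors carry out just before Definition~\ref{def.system-led}: combining the homogeneity-of-degree-one of the Euler flux, the Roe-averaged states $\contunk_\ij$, the partition-of-unity identity $\sum_j(\gradient\shapef[j],\shapef[i])=\0$, and the analogous telescoping for $\B_\ij$ (using $\graphl(i,j)=-1$ for $j\neq i$ and $\artdif^e_{ii}=\sum_{j\neq i}\artdif^e_\ij$) recasts row $i$ of the scheme as $m_i\partial_t\contunk_i+\sum_{j\neq i}\A_\ij(\contunk_j-\contunk_i)=\0$ with
\begin{equation}
\A_\ij=\conv^\prime_k(\contunk_\ij)\,C_{\ij,k}-\Bigl(\sum_e\artdif^e_\ij\Bigr)I_{m\times m},\qquad C_\ij\doteq\sum_e(\gradient\shapef[j],\shapef[i])_{\element[e]}.
\end{equation}
Zero row sum is then built into the form.

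The only nontrivial step, and the one I expect to be the main obstacle, is showing that $\A_\ij$ has non-positive eigenvalues. At the element level the bound is immediate: $\vdetector[i]=1$ yields $\artdif^e_\ij\geq\lambdamax[\ij]=\sr(\conv^\prime(\contunk_\ij)\cc^e_\ij)$ for each $e$. Passing from element-wise spectral radii to the spectral radius of the assembled matrix $\conv^\prime(\contunk_\ij)C_\ij$ would fail for a generic hyperbolic system because the per-element Jacobians do not share eigenvectors, but here one can exploit the explicit formula \eqref{eq.eigenvalues}, namely $\lambdamax[\ij]=|\vel_\ij\cdot\cc^e_\ij|+c\|\cc^e_\ij\|$. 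A double application of the triangle inequality (once to the dot product, once to the norm) then gives $\sum_e\artdif^e_\ij\geq|\vel_\ij\cdot C_\ij|+c\|C_\ij\|$. Because $\conv^\prime(\contunk_\ij)C_\ij$ is itself an Euler flux Jacobian in the direction $C_\ij$, its real eigenvalues are $\vel_\ij\cdot C_\ij$ (with multiplicity $d$) and $\vel_\ij\cdot C_\ij\pm c\|C_\ij\|$, all dominated by the right-hand side above. Subtracting $(\sum_e\artdif^e_\ij)I$ therefore leaves $\A_\ij$ with non-positive real eigenvalues, completing the verification.
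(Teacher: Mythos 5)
Your proposal is correct and follows essentially the same route as the paper's proof: the shock detector forces $\vdetector[i]=1$ at an extremum, which lumps the mass matrix, and the non-positivity of the eigenvalues of the off-diagonal blocks is obtained exactly as in the paper, via the explicit eigenvalue formula \eqref{eq.eigenvalues} and the triangle inequality to pass from element-wise spectral radii to the assembled block. If anything, your final step is slightly more explicit than the paper's in noting that the eigenvalues of $\conv^\prime(\contunk_\ij)\cdot\cc_\ij$ are real and individually dominated, so that subtracting $\bigl(\sum_e\artdif^e_\ij\bigr)I_{m\times m}$ shifts them all below zero.
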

\begin{proof}
	If component $\beta\in C$ of $\bunk$ has an extremum at $\x_i$, then from \cite[Lm. 3.1]{badia_monotonicity-preserving_2017} we know that $\detector[i](\unk^\beta)=1$. Moreover, from \eqref{eq.system-detector} is easy to see that $\vdetector[i](\bunk)=1$. In this case, $\oM_\ij(\unk) = (\delta_\ij,\shapef[i]) I_{m\times m}$. Hence, $\oM_\ij(\unk) = 0$ for $j\neq i$ and $\oM_{ii}(\unk) = m_i$. Therefore, we can rewrite the system as follows
	\begin{align}
	m_i\partial_t \contunk_i &+ \sum_{j\in\neighborhood[i]\backslash\{i\}} \oK_\ij(\contunk_\ij) (\contunk_j - \contunk_i) = \\
	m_i\partial_t \contunk_i &+ \sum_{j\in\neighborhood[i]\backslash\{i\}}\sum_{\element[e]\in\mesh} \left(\conv^\prime(\contunk_\ij)\cdot(\gradient\shapef[j],\shapef[i])_{\element[e]}-\artdif^e_\ij I_{m\times m}\right) (\contunk_j - \contunk_i) = \0.
	\end{align}
	We need to prove that the eigenvalues of $\oK_\ij(\contunk_\ij)$ are non-positive. To this end, let us show the following inequality holds
	\begin{equation}
	\sum_{\element[e]\in\mesh}\sr\left(\conv^\prime(\contunk_\ij)\cdot(\gradient\shapef[j],\shapef[i])_{\element[e]}\right)\geq\sr(\conv^\prime(\contunk_\ij)\cdot(\gradient\shapef[j],\shapef[i])).
	\end{equation}
	From \eqref{eq.eigenvalues}, it is easy to check that $\sr\left(\conv^\prime(\contunk_\ij)\cdot(\gradient\shapef[j],\shapef[i])_{\element[e]}\right) = |\vel_\ij\cdot\cc_\ij^e| + c\|\cc_\ij^e\|$. Since  $\cc_\ij=\sum_{\element[e]\in\mesh}\cc_\ij^e$, we have that
	\begin{equation}
	\sum_{\element[e]\in\mesh}	\left|\vel_\ij\cdot\cc_\ij^e\right| \geq \left|\vel_\ij\cdot\cc_\ij\right|, \quad \text{and}\quad 
	\sum_{\element[e]\in\mesh}	c\|\cc_\ij^e\| \geq c\|\cc_\ij\|.
	\end{equation}
	Therefore, $\sum_e\sr(\K_\ij^e(\contunk_\ij))\geq\sr(\K_\ij(\contunk_\ij))$. 
	Moreover, by definition (see \eqref{eq.artificial-diffusion}),
	\begin{equation}
	\artdif_\ij^e\geq\sr\left(\conv^\prime(\contunk_\ij)\cdot(\gradient\shapef[j],\shapef[i])_{\element[e]}\right) \quad \text{for} \quad j\neq i.
	\end{equation}
	Furthermore, from \eqref{eq.stabilization}, is easy to see that $\sr(\B_\ij^e(\contunk_\ij))\geq\sr(\K_\ij^e(\contunk_\ij))$, hence $\sr(\B_\ij(\contunk_\ij))\geq\sr(\K_\ij(\contunk_\ij))$. Finally, 
	since $\oK_\ij= \K_\ij + \B_\ij$ and $\B_\ij = \sum_e\B^e_\ij = \sum_e-\artdif_\ij^e I_{m\times m}$ for all $j\neq i$, then the maximum eigenvalue of $\oK_\ij(\contunk_\ij)$ is non-positive, which completes the proof.
\end{proof}

\subsection{Differentiability}\label{sec.differentiability}
In the case of steady, or implicit time integration, differentiability plays a role in the convergence behavior of the nonlinear solver. This is specially important if one wants to use Newton's method. In the case of scalar problems it has been shown in \cite{badia_differentiable_2017,badia_monotonicity-preserving_2017} that convergence is greatly improved after few modifications to make a scheme twice-differentiable. In this section, we introduce a set of regularizations applied to all non-differentiable functions present in the stabilized scheme introduced above. In order to regularize these functions, we follow a similar strategy as \cite{badia_differentiable_2017,badia_monotonicity-preserving_2017}. Absolute values are substituted by
\begin{equation}
\absn{x} =  \sqrt{x^2 + \varepsilon_h},\qquad
\absd{x} =  \frac{x^2}{\sqrt{x^2 + \varepsilon_h}}.
\end{equation}
Note that $\absd{x}\leq |x|\leq\absn{x}$. Next, we also use a smooth maximum function, $\smax(\cdot)$, as
\begin{equation}\label{eq:smax}
\smax(x,y) \doteq \frac{ \absn[\sigma_h]{x -y}}{2} +\frac{x +y}{2} \geq \max(x,y).
\end{equation}
In addition, we need a smooth function to limit the value of any given quantity to one. To this end, we use
\begin{equation}\label{eq.smthlimit}
\smthlimit{x} \doteq \left\{\begin{array}{ll}
2x^4-5x^3+3x^2+x, & x<1,\\
1, & x\geq 1.
\end{array}\right.
\end{equation}

The set of twice-differentiable functions defined above allows us to redefine the stabilization term introduced in Sect. \ref{sec.stabilization}. In particular, we define 
\begin{equation}
\tilde{B}_h(\boldsymbol{w}_h;\bunk,\btest)\doteq \sum_{\element[e]\in\mesh}\sum_{i,j \in \nodes(\element[e])} \smthartdif^e_\ij(\boldsymbol{w}_h)\graphl(i,j) \boldsymbol{v}_i \cdot  I_{m\times m} \boldsymbol{u}_j,
\end{equation}
where
\begin{equation}\label{eq.smth-artificial-diffusion}
\begin{aligned}
\smthartdif^e_\ij(\boldsymbol{w}_h) &  \doteq\smax\left(\vsmthdetector[i](\boldsymbol{w}_h)\lambdamax[ij],\vsmthdetector[j](\boldsymbol{w}_h)\lambdamax[ji]\right), \quad  
\text{for } j\in\neighborhood[i]\backslash\{i\}, \\
\smthartdif^e_{ii}(\boldsymbol{w}_h)&\doteq\displaystyle \sum_{j\in\neighborhood[i]\backslash\{i\}} \smthartdif^e_\ij(\boldsymbol{w}_h).\quad
\end{aligned}
\end{equation}
Let us note that $\lambdamax[\ij]$ needs to be regularized as $\lambdamax[\ij]=\absn{\vel_\ij \cc_\ij^e} + c\|\cc_\ij^e\|$. 
The shock detector is also redefined to use the regularized version of the shock detector, which reads
\begin{equation}\label{eq.smth-system-detector}
\vsmthdetector[i](\bunk)\doteq \textstyle\smax \{\smthdetector[i](\unk^\beta)\}_{\beta\in C}.
\end{equation}
In the case of the component shock detector we recall the definition in \cite[eq. 18]{badia_monotonicity-preserving_2017}
\begin{equation}\label{eq.smth-detector}
\smthdetector[i](\unk^\beta)\doteq
\left[\smthlimit{\dfrac{\absn{\sum_{j\in\neighborhood[i]} \jump{\gradient\unk^\beta}_\ij }+\zeta_h}{\sum_{j\in\neighborhood[i]}2\mean{\absd{\gradient\unk^\beta\cdot\hat{\rr}_\ij}}_\ij+\zeta_h}}\right]^q,
\end{equation}
where $\zeta_h$ is a small value for preventing division by zero. Finally, the twice-differentiable stabilized scheme reads:\\
Find $\bunk\in\bfespace$ such that $\unk^\beta=\bar{u}^\beta_h$ on $\inflowboundary^\beta$, $\bunk=\boldsymbol{u}_{0h}$ at $t=0$, and
\begin{equation}\label{eq.smth-stabilized-matrix-problem}
\sM(\bunk^\np)\derd_t\U^\np+\sK(\bunk^\np)\U^\np = \G \quad\text{for}\ n=1,...,n^{ts},
\end{equation}
where 
\begin{align}
\sM_\ij(\bunk^\np) & \doteq \left[1-\smax\left(\vsmthdetector[i],\vsmthdetector[j]\right)\right](\shapef[j],\shapef[i])I_{m\times m} + \smax\left(\vsmthdetector[i],\vsmthdetector[j]\right)(\delta_\ij,\shapef[i])I_{m\times m},\\
\sK_\ij(\bunk^\np) & \doteq \K_\ij(\bunk^\np) + \tilde{\B}_\ij(\bunk^\np),
\end{align}
and $\tilde\B_\ij(\bunk)\doteq \sum_{\element[e]\in\mesh}\smthartdif^e_{\ij}(\bunk)\ \graphl(i,j)  I_{m\times m}$, for $i,j\in\nodes$.

\begin{corollary}
	The differentiable scheme in Eq. \eqref{eq.smth-artificial-diffusion} is local bounds preserving, as defined in Def. \ref{def.system-led}, at any region where $\unk^\beta$ has extreme values for every $\beta$ in $C$. 
\end{corollary}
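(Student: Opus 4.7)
The plan is to carry over the proof of Lemma~\ref{thm-discreteLED} almost verbatim, verifying that each non-differentiable ingredient that appeared in that proof has been replaced by a regularized counterpart which preserves the sign/monotonicity inequality needed at the corresponding step. The overarching observation is that the regularizations $\absn{\cdot}$, $\absd{\cdot}$, $\smax$, and $\smthlimit{\cdot}$ were designed precisely so that $\absd{x}\le |x|\le \absn{x}$, $\smax(x,y)\ge\max(x,y)$, and $\smthlimit{x}=1$ whenever $x\ge 1$. Thus whatever inequality was needed between the artificial diffusion and the spectral radius in the original proof remains valid, only in the ``overestimate'' direction.

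The first step is to show that, at a node $\x_i$ where $\unk^\beta$ attains a local discrete extremum for some $\beta\in C$, the component detector in \eqref{eq.smth-detector} satisfies $\smthdetector[i](\unk^\beta)=1$. Applying $\absn{x}\ge |x|$ in the numerator and $\absd{x}\le|x|$ in the denominator of the argument of $\smthlimit{\cdot}$, one lower-bounds that argument by the ratio appearing inside \eqref{eq.detector} (up to the common $\zeta_h$ regularization, which only strengthens the bound). By \cite[Lm.~3.1]{badia_monotonicity-preserving_2017} that non-differentiable ratio equals one at an extremum, so the argument of $Z$ is $\ge 1$, and the saturation property of $Z$ gives $\smthdetector[i](\unk^\beta)=1$. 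Plugging this into \eqref{eq.smth-system-detector} and using $\smax\ge\max$ yields $\vsmthdetector[i]\ge 1$, and hence $\smax(\vsmthdetector[i],\vsmthdetector[j])\ge 1$ for every neighbour $j$.

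With the detector at extrema pinned to $\ge 1$, the mass contribution $\sM_{ij}$ collapses to the lumped, diagonal, positive form $m_i\,\delta_{ij}\,I_{m\times m}$, mirroring the corresponding reduction in the proof of Lemma~\ref{thm-discreteLED}. The remaining task is to verify the spectral property $\sK_{ij}\preceq 0$ for $j\ne i$. Here I would rerun the chain of inequalities in Lemma~\ref{thm-discreteLED} with each occurrence of $\max$ replaced by $\smax$ and each absolute value in $\lambdamax[ij]$ replaced by $\absn{\cdot}$. Since $\smax\ge\max$ and $\absn{\cdot}\ge|\cdot|$, we get $\smthartdif^e_{ij}\ge \artdif^e_{ij}$; combined with the element-summation argument $\sum_e\sr(\K^e_{ij}(\contunk_{ij}))\ge \sr(\K_{ij}(\contunk_{ij}))$, which is unchanged since it is purely algebraic, we conclude that $\sr(\tilde\B_{ij})\ge\sr(\K_{ij})$ and that $\sK_{ij}=\K_{ij}+\tilde\B_{ij}$ has non-positive eigenvalues.

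The step I expect to require the most care is the reduction of $\sM_{ij}$ to the lumped form: because $\smax$ is a \emph{strict} overestimate of $\max$, the coefficient $1-\smax(\vsmthdetector[i],\vsmthdetector[j])$ could in principle be slightly negative rather than zero, so one must argue that at extrema the value is exactly one (e.g.\ by showing that when any single component detector attains the value one, $\smax$ agrees with $\max$ on the relevant pair, perhaps via an appropriate choice of the regularization parameter $\sigma_h$), or else interpret local bounds preservation up to a controllable $O(\sigma_h)$ perturbation. All other steps are direct inequalities inherited from the non-differentiable proof.
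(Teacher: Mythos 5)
Your proposal is correct and follows essentially the same route as the paper's own proof: the detector saturates to one at an extremum because $\absd{x}\le|x|\le\absn{x}$ pushes the argument of $\smthlimit{\cdot}$ above one, and then $\smax\ge\max$ gives $\smthartdif^e_\ij\ge\artdif^e_\ij$, so the spectral bound from Lemma~\ref{thm-discreteLED} carries over. Your closing remark about the mass matrix is in fact \emph{more} careful than the paper, whose proof silently omits this point: since $\smax$ strictly overestimates $\max$, the off-diagonal entries of $\sM_\ij$ are $O(\sqrt{\sigma_h})$ negative rather than exactly zero, so the reduction to the lumped form in Def.~\ref{def.system-led} only holds up to this controllable perturbation (or exactly, in the limit $\sigma_h\to 0$).
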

\begin{proof}	
	For an extreme value of $\unk^\beta$, since $\absd{x}\leq |x|\leq\absn{x}$ the quotient of \eqref{eq.smth-detector} is larger than one. Hence, by definition of $Z(x)$,  $\smthdetector[i]$ is equal to 1. At this point, it is easy to check that $\smthartdif_\ij^e\geq\artdif_\ij^e$ in virtue of the definition of $\smax$. Therefore, $\sr(\tilde{\B}^e_\ij(\bunk)) \geq \sr(\B^e_\ij(\bunk))$, completing the proof.
\end{proof}

Moreover, it is important to mention that the differentiable shock detector is weakly linearly-preserving as $\zeta_h$ tends to zero. This result follows directly from \cite{badia_monotonicity-preserving_2017}. In order to obtain a differentiable operator, we have added a set of regularizations that rely on different parameters, e.g., $\sigma_h,\ \varepsilon_h,\ \zeta_h$. Giving a proper scaling of these parameters is essential to recover theoretic convergence rates. In particular, we use the following relations
\begin{equation}\label{eq.scaling}
\sigma_h=\sigma|\lambdamax|^2  L^{2(d-3)} h^{4},\quad \varepsilon_h=\varepsilon L^{-4} h^2,\quad \zeta_h=L^{-1}\zeta,
\end{equation}
where $d$ is the spatial dimension of the problem, $L$ is a characteristic length, and $\sigma,\ \varepsilon,$ and $\zeta$ are of the order of the unknown.

\section{Nonlinear solver}\label{sec.solver}
In this section, we describe the method used for solving the nonlinear system of equations arising from the scheme introduced above. In particular, we use a hybrid Picard--Newton approach in order to increase the robustness of the nonlinear solver. Moreover, for the differentiable version we also use a continuation method to improve the nonlinear convergence.

We represent the residual of the equation \eqref{eq.smth-stabilized-matrix-problem} at the $k$-th iteration by $\R(\bunk^\nk)$, i.e., 
\begin{equation}\label{eq.residual}
\R(\bunk^\nk) \doteq \sM(\bunk^\nk)\derd_t\U^\nk+\sK(\bunk^\nk)\U^\nk - \G.
\end{equation}
Hence, the Jacobian is defined as
\begin{align}\label{eq.jacobian}
\J(\bunk^\nk)&\doteq \frac{\partial\R(\bunk^\nk)}{\partial \U^\nk} \\
&= \dtnp^{-1}\sM(\bunk^\nk)+\sK(\bunk^\nk) + \dtnp^{-1}\frac{\partial\sM(\bunk^\nk)}{\partial \U^\nk}\derd_t\U^\nk+\frac{\partial \sK(\bunk^\nk)}{\partial \U^\nk}\U^\nk.
\end{align}
Therefore, Newton method consists of solving $\J(\bunk^\nk) \Delta\U^{k+1,n+1} = - \R(\bunk^\nk)$. However, it is well known that Newton method can diverge if the initial guess of the solution $\bunk^{0,n+1}$ is not close enough to the solution. In order to improve the robustness, we introduce the following modifications. We use a line-search method to update the solution at every time step. Thus, the new approximation is computed as $\U^{k+1,n+1} = \U^\nk + \lambda\Delta\U^{k+1,n+1}$, where $\lambda$ is computed (approximately) such that it minimizes $\|\R(\bunk^{k+1,n+1})\|$. To approximate $\lambda$ we use a standard golden section search algorithm \cite{Brent1972}. However, any other minimization or backtracking strategy could potentially be used.

As introduced at the beginning of the section, we also use a hybrid approach combining Newton method with Picard linearization. Picard nonlinear iterator can be obtained removing the last two terms of \eqref{eq.jacobian}, i.e.,
\begin{equation}\label{eq.picard}
\left(\dtnp^{-1}\sM(\bunk^\nk)+\sK(\bunk^\nk)\right)\Delta\U^{k+1,n+1} = -\R(\bunk^\nk).
\end{equation}
Clearly, it is equivalent to 
\begin{equation}
\left(\dtnp^{-1}\sM(\bunk^\nk)+\sK(\bunk^\nk)\right)\U^{k+1,n+1} = \sM(\bunk^\nk)\U^{n} + \G.
\end{equation}

Moreover, we modify the definition of left hand side terms in \eqref{eq.picard} to enhance the robustness of the method. In particular, we use $\vdetector[i]=1$ for computing these terms while we use the value obtained from \eqref{eq.system-detector} for the residual. Using this strategy the solution remains unaltered, but the obtained approximations $\bunk^\nk$ for intermediate values of $k$ are more diffusive. Even though this modification slows the nonlinear convergence, it is essential at the initial iterations. Otherwise, the robustness of the method might be jeopardized.

The resulting iterative nonlinear solver consists in the following. We iterate using Picard method in \eqref{eq.picard}, with the modification described above, until the $\ltwo$ norm of the residual is smaller than a given tolerance. In the present work, we use tolerances close to $10^{-2}$. Afterwards, Newton method with the exact Jacobian in \eqref{eq.jacobian} is used until the desired nonlinear convergence criteria are satisfied.

For the differentiable stabilization, we also equip the above scheme with a continuation method on the regularization parameters. In order to accelerate the convergence of the method, we use high values for the parameters during the first iterations. This results in a more diffusive solution, but nonlinear convergence is accelerated. As the nonlinear approximation is closer to the solution, we diminish the value of the parameters to avoid introducing excessive artificial diffusion to the system. This process is preformed gradually as a function of the residual in \eqref{eq.residual}. In particular, we use the following relation
\begin{equation}
\varepsilon^k = \tilde{\varepsilon} \ \frac{\|\R(\bunk^\nk)\|}{\|\R(\bunk^{0,n+1})\|},
\end{equation}
where $\varepsilon^k$ is the effective parameter used in relations \refeq{eq.scaling}, and $\tilde{\varepsilon}$ is parameter defined by the user. We summarize the nonlinear solver introduced above in Alg.\ \ref{alg.scheme}.

\begin{algorithm}[h]
\caption{Hybrid Picard--Newton scheme with the continuation method.}\label{alg.scheme}
\KwIn{$\U^{0,n+1}$, ${\rm tol_1}$, ${\rm tol_2}$, $\varepsilon$, Continuation}
\KwOut{$\U^{k,n+1}$, $k$}
$k=1$, $\varepsilon^1 = \varepsilon$ \\
\While{$ \|\R(\U^\nk)\|/ \|\R(\U^{0,n+1})\| \geq \rm tol_1$ }{
	Compute $\vdetector[i](\U^\nk)$ using \eqref{eq.system-detector} \\
	Compute $\Delta \U^{k+1,n+1}$ using \eqref{eq.picard} \\
	Minimize $\|\R(\U^{k+1,n+1})\|$, where $\U^{k+1,n+1} = \lambda\Delta \U^{k+1,n+1} + \U^\nk$, with respect to $\lambda$ \\
	Set $\U^{k+1,n+1}=\lambda\Delta \U^{k+1,n+1} + \U^\nk$ \\
	\eIf{\rm Continuation }{
		Set $\varepsilon^k = \tilde{\varepsilon}\frac{\|\R(\U^{k+1,n+1})\|}{\|\R(\U^{0,n+1})\|}$ \\
	}{
		Set $\varepsilon^k = \varepsilon$
	}
	Set $\sigma^k = 10^2\, \varepsilon^k $ \\
	Update $k= k+1$
}

\While{$ \|\R(\U^\nk)\|/ \|\R(\U^{0,n+1})\| \geq \rm tol_2$ }{
	Compute $\vdetector[i](\U^\nk)$ using \eqref{eq.system-detector} \\
	Solve $\J(\U^\nk)\Delta \U^{k+1,n+1}=-\R(\U^\nk)$ with $\J$ in \eqref{eq.jacobian} \\
	Minimize $\|\R(\U^{k+1,n+1})\|$, where $\U^{k+1,n+1} = \lambda\Delta \U^\nk + \U^\nk$, with respect to $\lambda$ \\
	Set $\U^{k+1,n+1}=\lambda\Delta \U^\nk + \U^\nk$ \\
	\eIf{\rm Continuation }{
		Set $\varepsilon^k = \tilde{\varepsilon}\frac{\|\R(\U^{k+1,n+1})\|}{\|\R(\U^{0,n+1})\|}$ \\
	}{
		Set $\varepsilon^k = \varepsilon$
	}
	Set $\sigma^k = 10^2\, \varepsilon^k $ \\
	Update $k= k+1$
}
\end{algorithm}

\section{Numerical experiments}\label{sec.experiments}
In this section, we perform several numerical experiments to assess the numerical scheme introduced in the previous sections. First, we perform a convergence analysis to assess its implementation. Then, we use a steady benchmark test to analyze the effectiveness of the regularization parameters. We also analyze their effectiveness in the case of a transient problem. Finally, we solve a slightly more challenging steady benchmark test.

In all experiments below we assume that the ideal gas state equation applies, and we use an adiabatic index of $\gamma=1.4$. From previous experience \cite{badia_differentiable_2017,badia_monotonicity-preserving_2017,Bonilla2018a,Bonilla2019a}, the effects of parameters $\sigma$ and $\varepsilon$ to the nonlinear convergence and numerical error are analogous. Hence, we consider $\varepsilon = 10^{-2} \sigma$. In addition, for all the tests below, the density is discontinuous at all shocks. Therefore, we use $C=\{1\}$ in \eqref{eq.system-detector}, i.e. the shock detector is based on the density behavior.

\subsection{Convergence test}
We use two different problems to assess the convergence rate of the scheme. One has a smooth solution, whereas in the other there is a shock. The smooth problem is simply the translation of a sinusoidal perturbation in the density, with constant pressure and velocity. In particular, the solution for $r=\sqrt{(0.5+t-x)^2 + (0.5-y)^2}<0.5$ is
\begin{equation}
\contunk=\left[\begin{array}{c}
\rho \\
v_1 \\ v_2 \\ p
\end{array}\right]
= \left[\begin{array}{c}
1+0.9999 \cos (2\pi r) \\
1 \\ 0 \\ 1
\end{array}\right],
\end{equation}
and $\contunk=[0.0001, 1, 0, 1]^t$ otherwise.

The non-smooth problem is the well known compression corner test \cite{AndersonJr.1990,Kuzmin2012}, also known as oblique shock test \cite{shakib_new_1991,tezduyar_stabilization_2006}. This benchmark consists in a supersonic flow impinging to a wall at an angle. We use a $[0,1]^2$ domain with a $M=2$ flow at $10^{\circ}$ with respect to the wall. This leads to two flow regions separated by an oblique shock at 29.3$^{\circ}$, see the scheme in Fig.\ \ref{fig.corner}. 
\begin{figure}[h]
	\centering
	\includegraphics[width=0.4\textwidth]{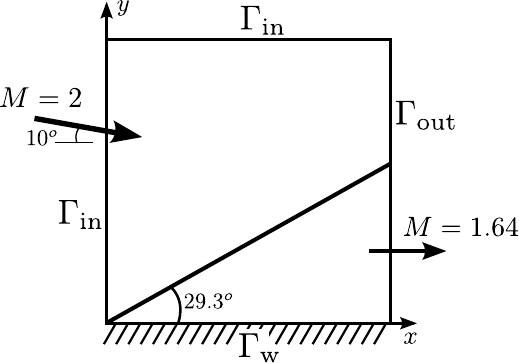}
	\caption{Compression corner scheme.}
	\label{fig.corner}
\end{figure}

For both tests, we compare the convergence rates for the differentiable and the non-differentiable schemes. $q$ is set to 10 and the regularization parameters are $\gamma=10^{-10}$, $\varepsilon=10^{-4}$, and $\sigma=10^{-2}$ in the differentiable version. The convergence criterion for both tests is $\frac{\|\Delta \bunk^{k+1}\|}{\|\bunk^k\|}<10^{-6}$. The scheme is able to converge in less than 10 iterations for the smooth problem, regardless of the setting or the mesh used. However, for the compression corner some tests did not converge. In this case, the iteration limit is set to 150. Nevertheless, $\frac{\|\Delta \bunk^{k+1}\|}{\|\bunk^k\|}\ll \|\rho-\rho_h\|_{L^1(\Omega)}$ is always checked and satisfied for all tests.

In Fig.\ \ref{fig.convergence-rates}, the $\lone$ error is depicted for different mesh sizes, and in Tab.\ \ref{tab.convergence-rates} we collect the measured convergence rates. It can be observed that for a smooth problem both settings recover second order convergence, whereas for non-smooth problems the expected first order convergence rates are obtained. For this particular choice of regularization parameters, we observe that the errors are slightly higher. However, the convergence rates are not affected by the regularization described in Sect.\ \ref{sec.differentiability}.

\begin{figure}[h]
\begin{subfigure}[b]{0.48\textwidth}
	\includegraphics[width=\textwidth]{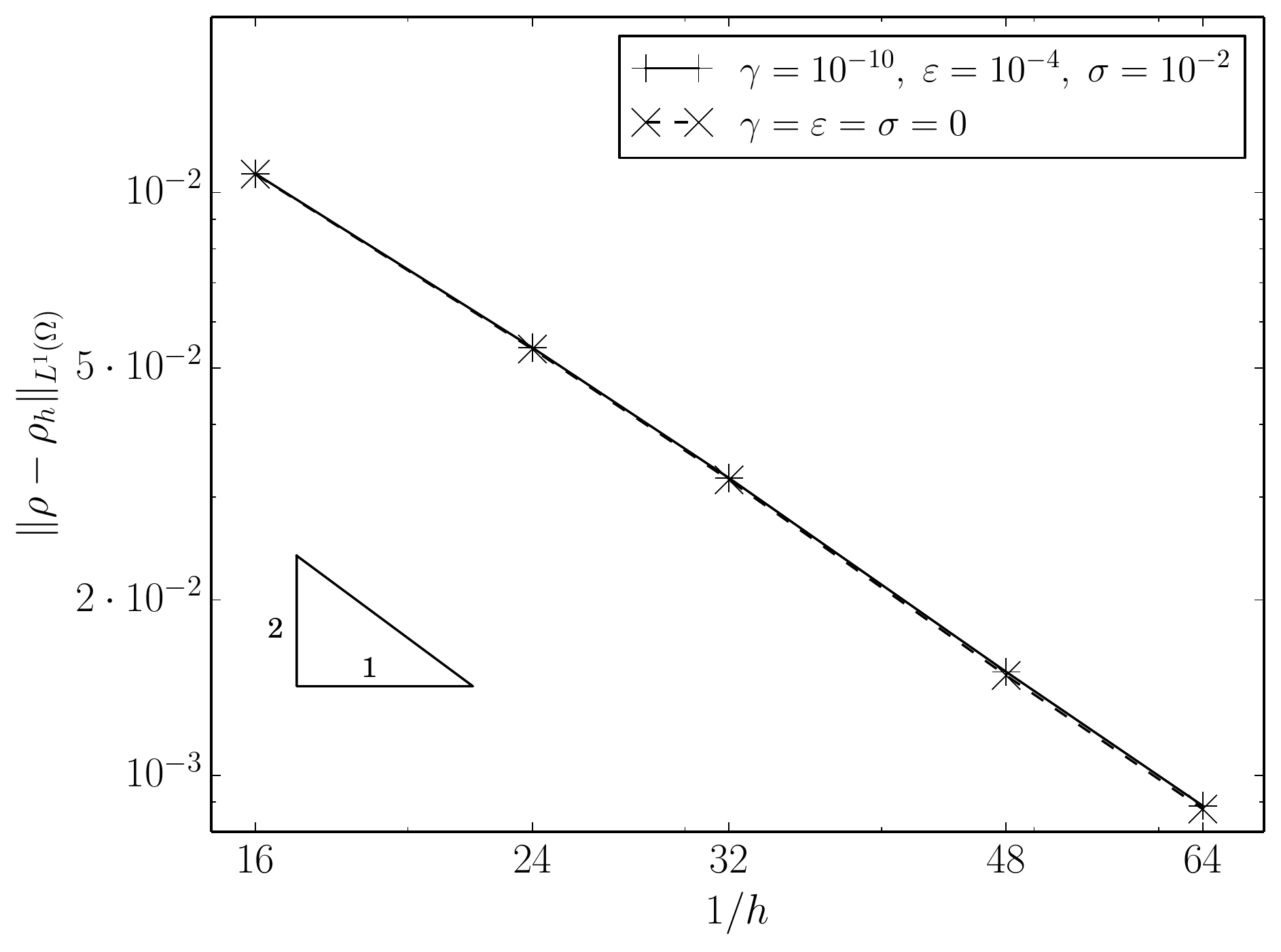}
	\caption{Sinusoidal translation test.}
\end{subfigure}
\begin{subfigure}[b]{0.48\textwidth}
	\includegraphics[width=\textwidth]{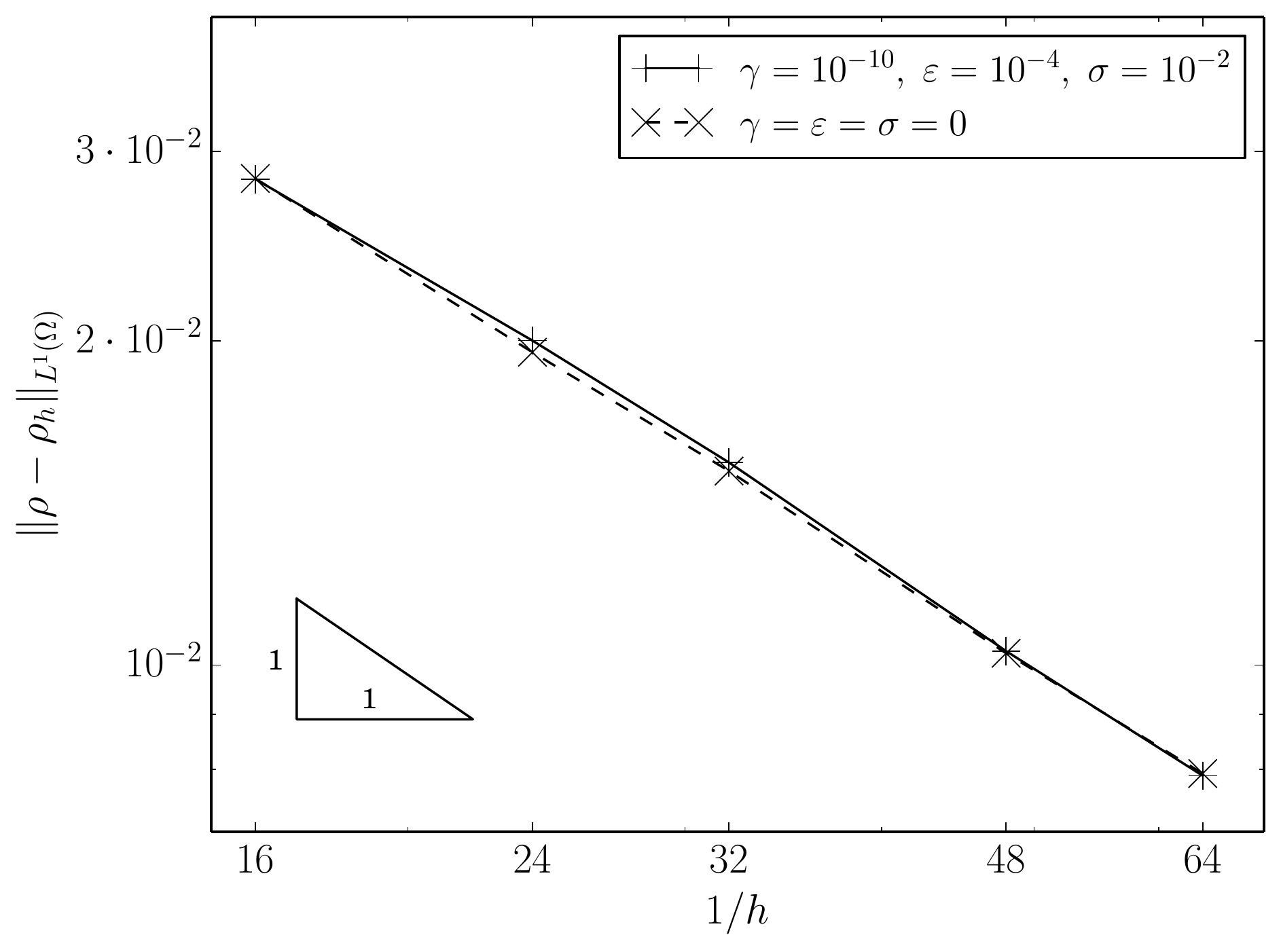}
	\caption{Compression corner test.}
\end{subfigure}
\caption{Density convergence for successive mesh refinements.}
\label{fig.convergence-rates}
\end{figure}

\begin{table}[h]
	\centering
	\caption{Experimental convergence rates for both problems.}
	\label{tab.convergence-rates}
	\begin{tabular}{lc} \hline
		\hspace{14mm} Test & $L_1$ error  \\ \hline
		Sinusoidal translation (differentiable) & 1.8099  \\
		Sinusoidal translation (non-differentiable) & 1.8190  \\
		Compression corner (differentiable) & 0.9278  \\
		Compression corner (non-differentiable) & 0.9207 \\ \hline
	\end{tabular}
\end{table}

\subsection{Reflected Shock}
In this test, we compare the nonlinear convergence behavior of the method for different regularization parameters. This benchmark consists in two flow streams colliding at different angles. The domain has dimensions $[0.0,1.0]\times[0.0,4.1]$ and a solid wall at its lower boundary. This configuration leads to a steady shock separating both flow regimes, which in turn, is reflected at the wall producing a third different flow state behind it. A sketch of this benchmark test is given in Fig.\ \ref{fig.reflected-scheme}. The flow states at each region have been collected in Tab.\ \ref{tab.reflected-shock}.

\begin{figure}[h]
	\centering
	\includegraphics[width=0.7\textwidth]{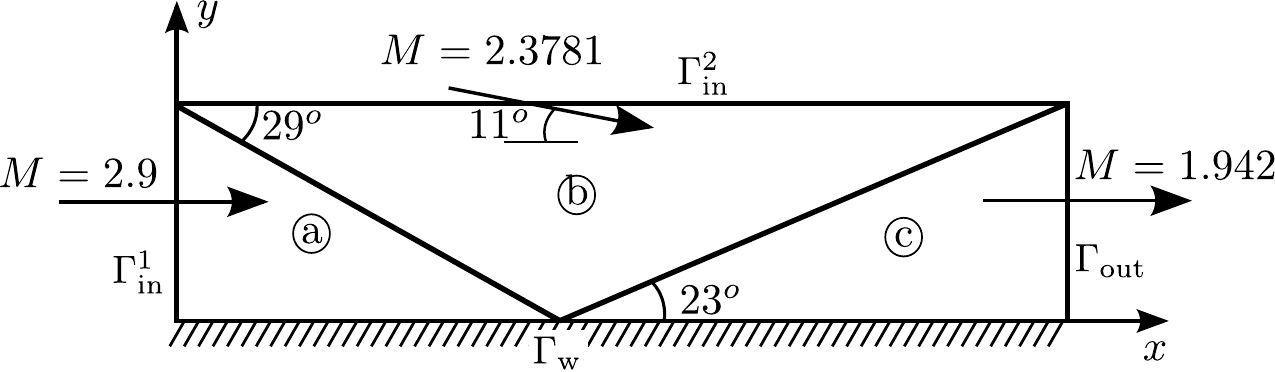}
	\caption{Reflected shock scheme.}
	\label{fig.reflected-scheme}
\end{figure}

\begin{table}[h]
\centering
\caption{Reflected shock solution values at every region.}
\label{tab.reflected-shock}
\begin{tabular}{cccc} \hline
    Region & Density [Kg\,m$^{-3}$] & Velocity [m\,s$^{-1}$] & Total energy [J] \\ \hline
	\textcircled{a} & 1.0 & (2.9, 0.0) & 5.99075 \\
	\textcircled{b} & 1.7 & (2.62, -0.506)& 5.8046 \\
	\textcircled{c} & 2.687 & (2.401, 0.0) & 5.6122 \\ \hline
\end{tabular}
\end{table}
We use a 60$\times$20 structured $\mathcal{Q}_1$ mesh. The problem is solved directly to steady state using the hybrid method and the continuation scheme described in Sect.\ \ref{sec.solver}. The tolerance used for switching from Picard to Newton linearization is $10^{-2}$. We compare the convergence behavior for $q=\{1,2,5,10\}$. For the differentiable stabilization we use the following values for $\tilde{\varepsilon}=\{10^{-4},10^{-2},1\}$. We consider $\varepsilon^k = \sigma^k\, 10^{-2}$. The value of $\gamma$ is $10^{-10}$. 

In Figs.\ \ref{fig.reflected-convergence-q1}-\ref{fig.reflected-convergence-q10} for every nonlinear iteration we depict (from left to right) the relative residual, the relative Galerkin residual, and the relative solution variation between iterations. The Galerkin residual is 
simply the residual in \eqref{eq.residual} minus the stabilization terms, i.e.,
\begin{equation}
\R^* (\bunk^k) \doteq \K(\bunk^k)\U^k - \G.
\end{equation}
We depict this value relative to the Galerkin residual of the non-differentiable scheme. This value gives a sense of how close is the computed approximation to the solution of the original problem. However, since it omits the stabilization terms present in the system solved, it will stagnate at some point.

In general, we can observe in Figs.\ \ref{fig.reflected-convergence-q1}-\ref{fig.reflected-convergence-q10} that as $q$ is increased nonlinear convergence rates are reduced. For instance, one can observe that the higher is $q$ the more iterations the scheme needs to reach $\|\R(\bunk^{k+1})\|/\|\R(\bunk^{0})\|<10^{-3}$. Unfortunately, using a low value of $q$ might also make the scheme to stagnate before reaching convergence. This is observed in Figs.\ \ref{fig.reflected-convergence-q1} and \ref{fig.reflected-convergence-q2}. 
In addition, we observe a $15\%$ to $35\%$ reduction in the number of iterations when the differentiable scheme is used. Another interesting observation is about the behavior of the Galerkin residual. This residual is not expected to converge since we are not solving the original problem but the stabilized one. However, it provides an indication of how close to the original solution is the one obtained by the proposed scheme. During the first iterations, the differentiable scheme is able to provide solutions closer to the solution of the original problem. This implies that, up to some extent, the differentiable scheme is able to provide more accurate solutions from the beginning of the iterative process. It is also interesting to observe the improvement in the residual convergence once the complete Jacobian is used, i.e. after $\|\R(\bunk^{k+1})\|/\|\R(\bunk^{0})\|<10^{-2}$. This is specially evident in Figs.\ \ref{fig.reflected-convergence-q5}-\ref{fig.reflected-convergence-q10}.

\begin{figure}[h]
	\centering
	\includegraphics[width=\textwidth]{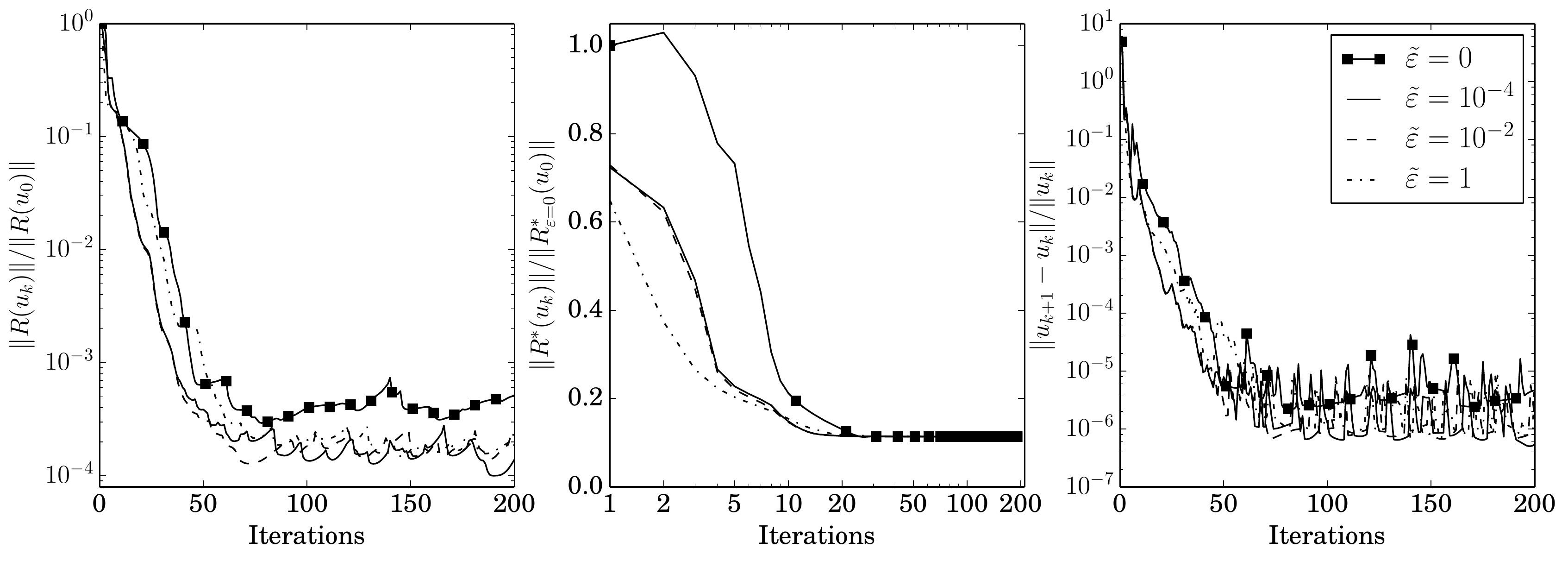}
	\caption{Reflected shock convergence history for $q=1$.}
	\label{fig.reflected-convergence-q1}
\end{figure}
\begin{figure}[h]
	\centering
	\includegraphics[width=\textwidth]{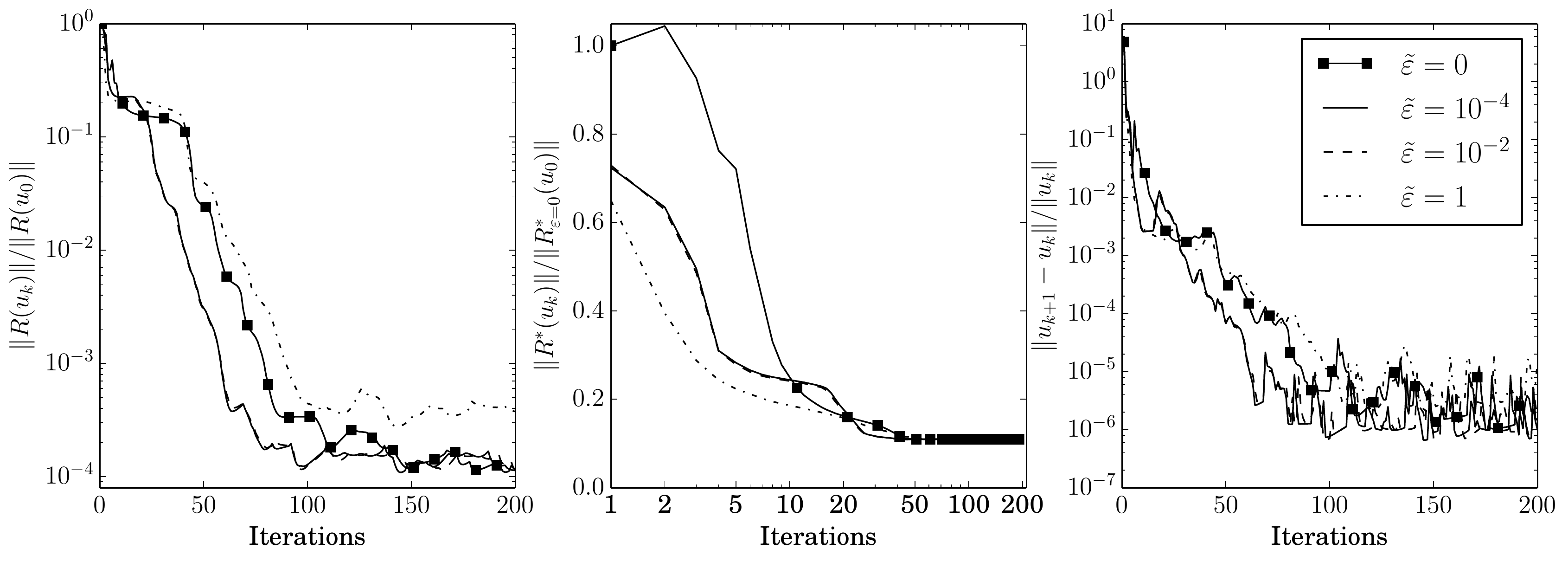}
	\caption{Reflected shock convergence history for $q=2$.}
	\label{fig.reflected-convergence-q2}
\end{figure}
\begin{figure}[h]
	\centering
	\includegraphics[width=\textwidth]{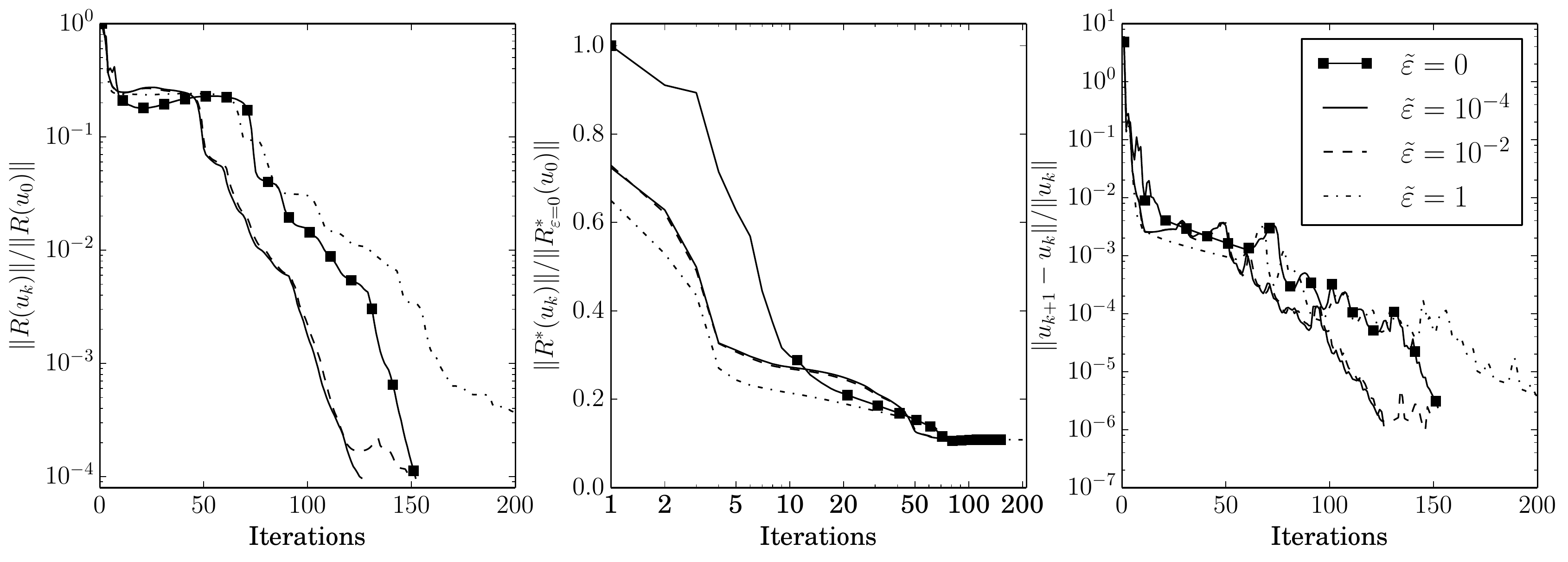}
	\caption{Reflected shock convergence history for $q=5$.}
	\label{fig.reflected-convergence-q5}
\end{figure}
\begin{figure}[h]
	\centering
	\includegraphics[width=\textwidth]{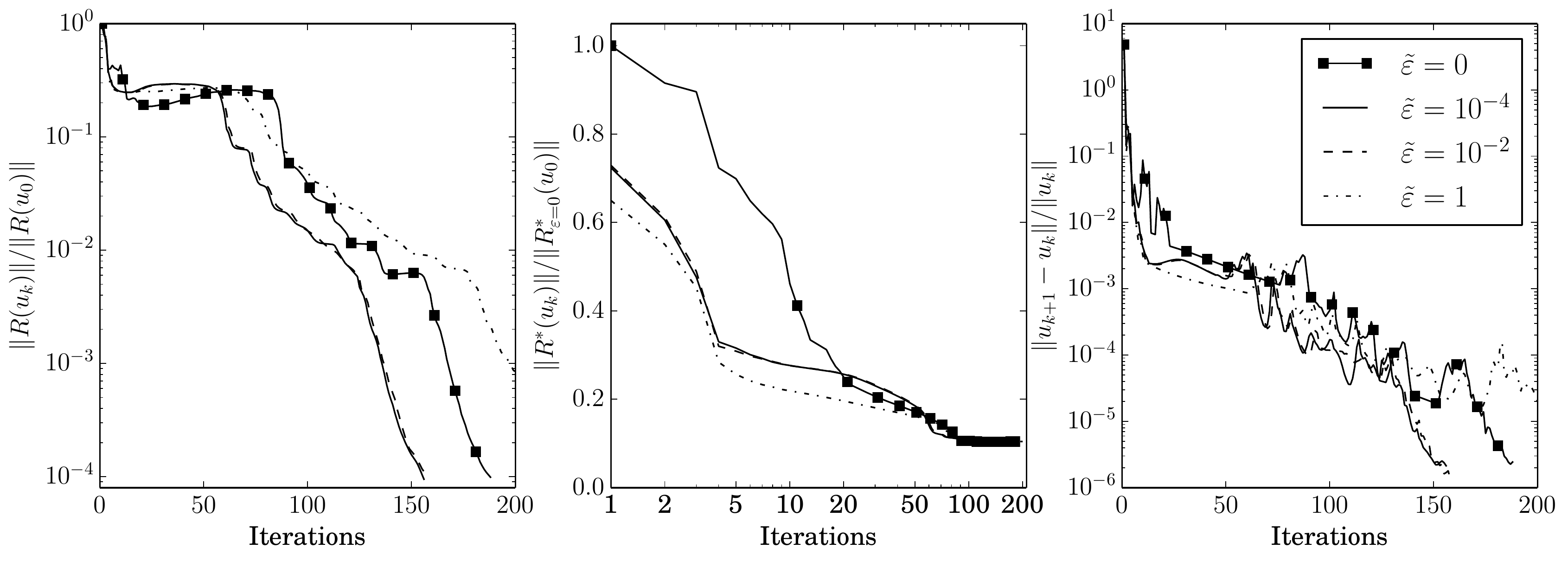}
	\caption{Reflected shock convergence history for $q=10$.}
	\label{fig.reflected-convergence-q10}
\end{figure}

\subsection{Sod's Shock Tube} 
In this section, we evaluate the effect of the differentiability in the case of a transient problem. To this end, we solve the well known Sod's shock tube test. This is a one dimensional problem that assesses the evolution of a fluid initially at rest with a discontinuity in density and pressure. The discontinuity is initially placed at $x=0.5$. Even though it is a 1D test, we consider a narrow 2D strip of dimensions $[0,1]\times[0,0.01]$ and we let the problem evolve until $t=0.2$.  We use a $\mathcal{Q}_1$ \ac{fe} mesh of size $\Delta x=0.01$ and a time step length of $\Delta t=0.001$. Initial conditions at the left of the discontinuity are $\boldsymbol{u}_0 = (1, 0, 0, 2.5)$ and at the right $\boldsymbol{u}_0 = (0.125, 0, 0, 0.25)$. See the initial condition depicted in Fig.\ \ref{sfig.sod-shoc-ic}.

In this case, the hybrid nonlinear solver described in Sect.\ \ref{sec.solver} is used directly without the continuation scheme. The tolerance used for switching from the Picard to Newton linearization is $5\cdot 10^{-3}$. We set the nonlinear convergence criteria in terms of the relative residual, namely 
$
\frac{\|\R(\bunk^\nk)\|}{\|\R(\bunk^{0,n+1})\|} < 10^{-6}
$.
We use $\gamma=10^{-10}$, $\varepsilon=\{10^{-2},10^{-3},10^{-4},10^{-5}\}$, and $\varepsilon=\sigma\,10^{-2}$ for the differentiable stabilization. We also use different values of $q$ for this comparison, namely $q=\{1, 2, 4, 6, 8, 10, 12\}$. 

\begin{figure}[h]
	\centering
	\begin{subfigure}[b]{0.45\textwidth}
		\includegraphics[width=\textwidth]{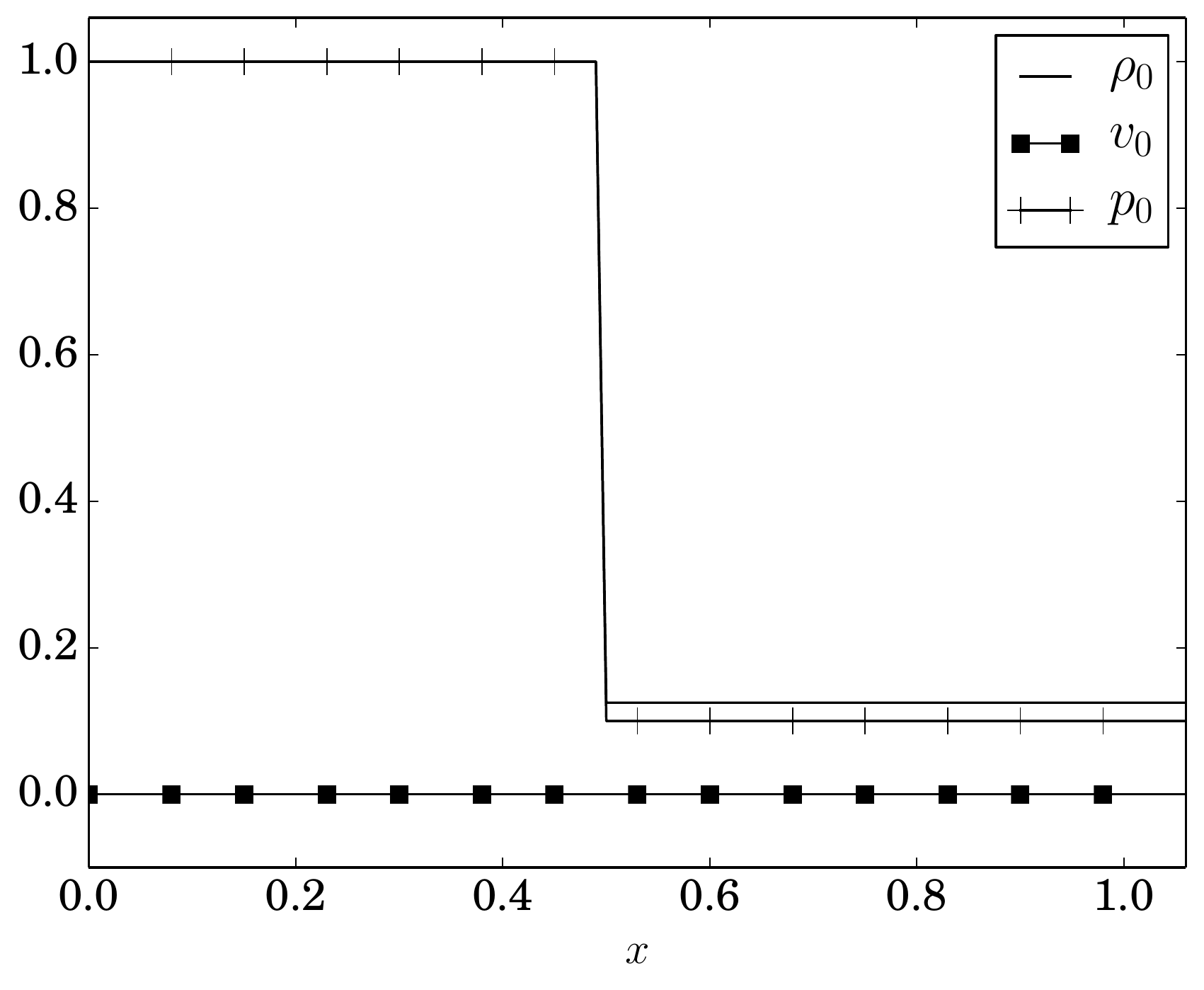}
		\caption{Initial condition.}
		\label{sfig.sod-shoc-ic}
	\end{subfigure}
	\begin{subfigure}[b]{0.45\textwidth}
		\includegraphics[width=\textwidth]{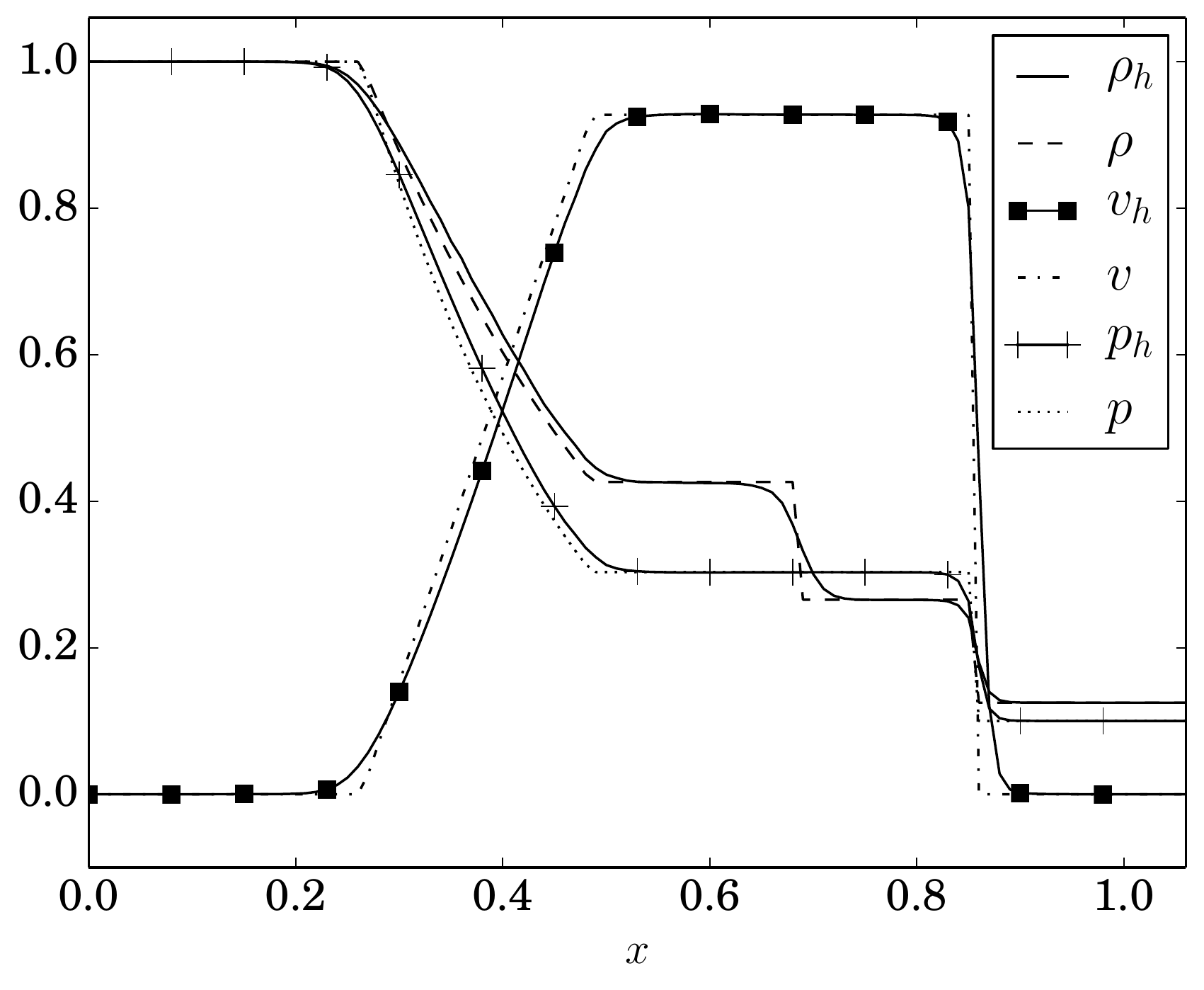}
		\caption{Solution a $t=0.2$.}
		\label{sfig.sod-shoc-q10}
	\end{subfigure}
	\caption{Sod shock initial condition and solution for the differentiable scheme using parameters $q=10$, $\sigma=10^{-3}$, $\varepsilon=10^{-5}$, and $\gamma=10^{-10}$.}
	\label{fig.sod-shock_solution}
\end{figure}
Fig.\ \ref{sfig.sod-shoc-q10} shows a comparison at $t=0.2$ of the exact solution from ExactPack \cite{Singleton2017} against the obtained solution for $q=10$, $\sigma=10^{-3}$, $\varepsilon=10^{-5}$, and $\gamma=10^{-10}$. In this case, we observe a good agreement of the obtained solution despite the rather coarse mesh being used.

In Fig.\ \ref{fig.sod-shock}, for different regularization values, we depict the total number of nonlinear iterations required to reach $t=0.2$, and the density error $\lone$ norm, as a function of the value of $q$. For each chart, we compare the results for the differentiable and non-differentiable stabilization. Analyzing these figures, several general observations can be made. One recovers the behavior of the non-differentiable scheme as the parameters used in the differentiable scheme become smaller (see \ref{sfig.sod-shoc-4}). Using large values for the regularization parameters improves the computational cost required at the expense of higher numerical errors (see \ref{sfig.sod-shoc-1}). It can also be seen that for transient problems the benefits of differentiability are not as evident as for problems solved directly to steady state. Notice that the differentiable scheme always require less iterations to converge. However the difference is smaller than for steady state problems as one would expect since the time advancement provides a decrease in the effective regularization by the existence of the time derivative and the evolution of the transient problem at each time step.

Another interesting observation can be made when moderate values for the parameters are used. Namely, the differentiable scheme is able to yield results with a similar accuracy while requiring a lower computational cost. For example, let us focus on Fig.\ \ref{sfig.sod-shoc-2} and compare the performance of the differentiable scheme for $q=2$ and the non-differentiable with $q=1$. The first observation is that both settings have similar accuracy. However, the differentiable scheme converges faster. If we focus on Fig.\ \ref{sfig.sod-shoc-3} similar observations can be made. For instance, compare the performance of the differentiable scheme with $q=4$ and the non-differentiable with $q=2$. The differentiable scheme is able to yield a more accurate solution in less iterations. Therefore, one can come to the conclusion that in order to achieve a given accuracy it is preferable to use the differentiable scheme with a slightly larger value of $q$ rather than the non-differentiable scheme and a lower value for $q$.
\begin{figure}[h]
	\centering
	\begin{subfigure}[b]{0.45\textwidth}
		\includegraphics[width=\textwidth]{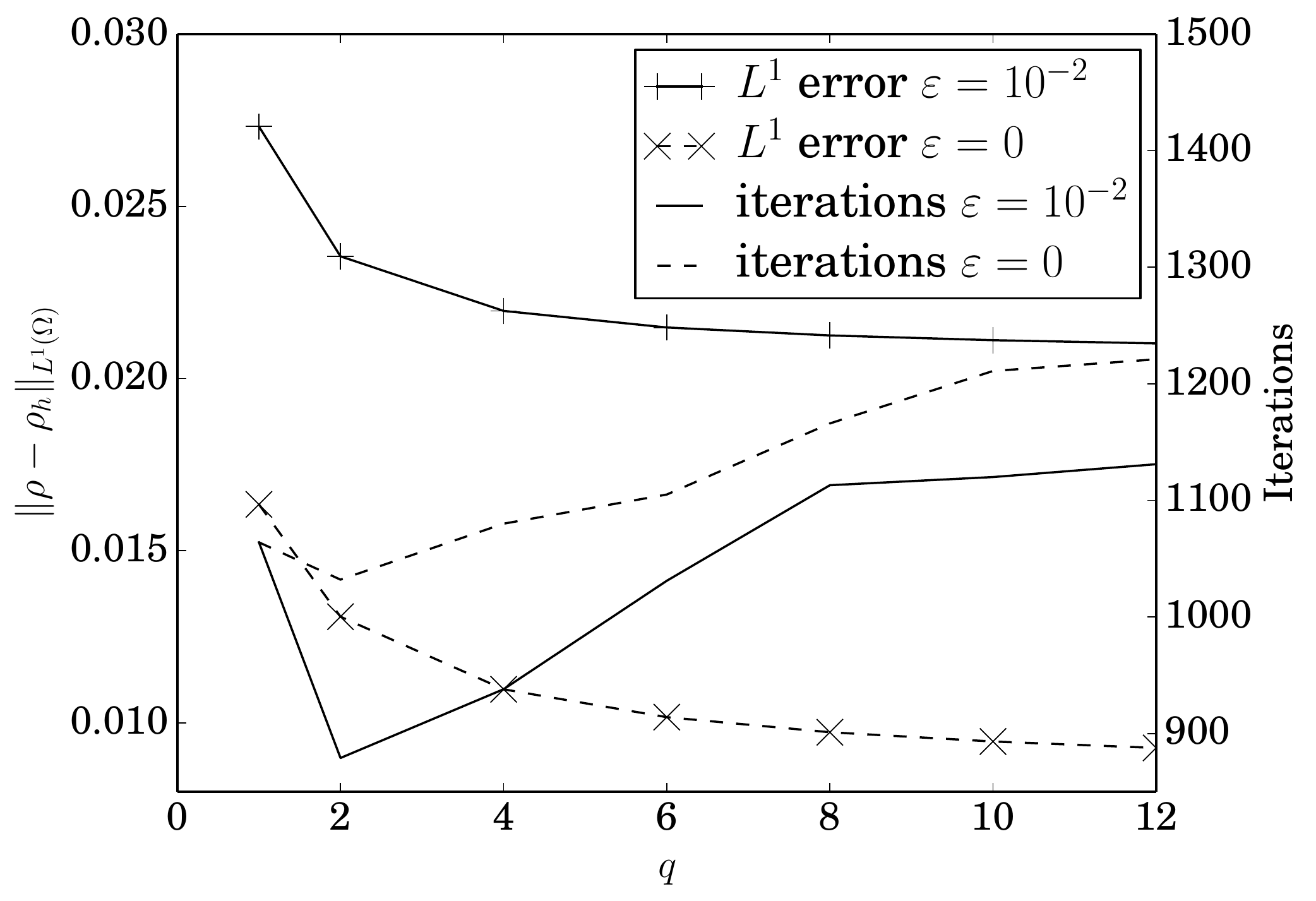}
		\caption{$\sigma=1$, $\varepsilon=10^{-2}$, $\gamma=10^{-10}$}
		\label{sfig.sod-shoc-1}
	\end{subfigure}
	\begin{subfigure}[b]{0.45\textwidth}
		\includegraphics[width=\textwidth]{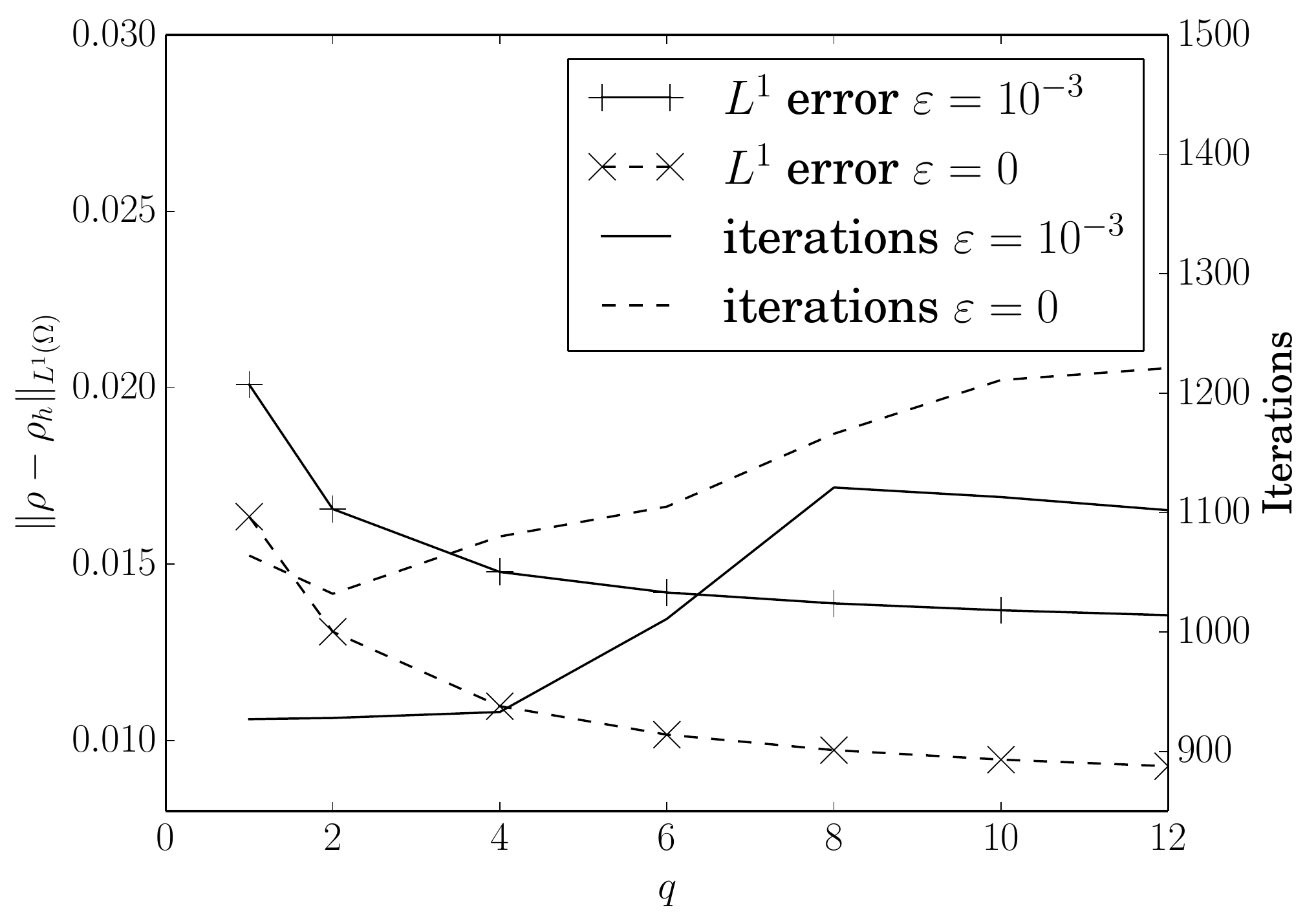}
		\caption{$\sigma=10^{-1}$, $\varepsilon=10^{-3}$, $\gamma=10^{-10}$}
		\label{sfig.sod-shoc-2}
	\end{subfigure}
	\begin{subfigure}[b]{0.45\textwidth}
		\includegraphics[width=\textwidth]{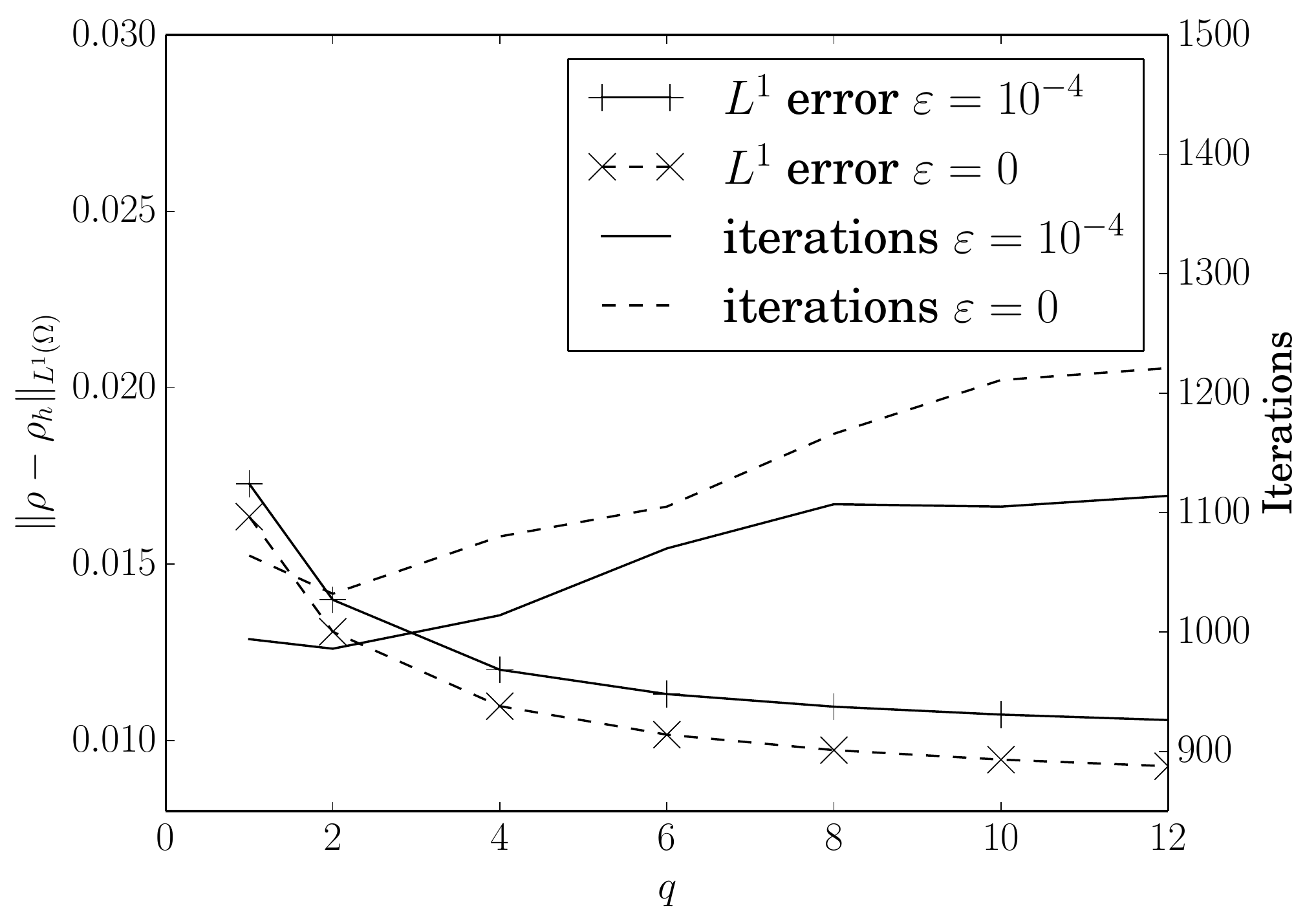}
		\caption{$\sigma=10^{-2}$, $\varepsilon=10^{-4}$, $\gamma=10^{-10}$}
		\label{sfig.sod-shoc-3}
	\end{subfigure}
	\begin{subfigure}[b]{0.45\textwidth}
		\includegraphics[width=\textwidth]{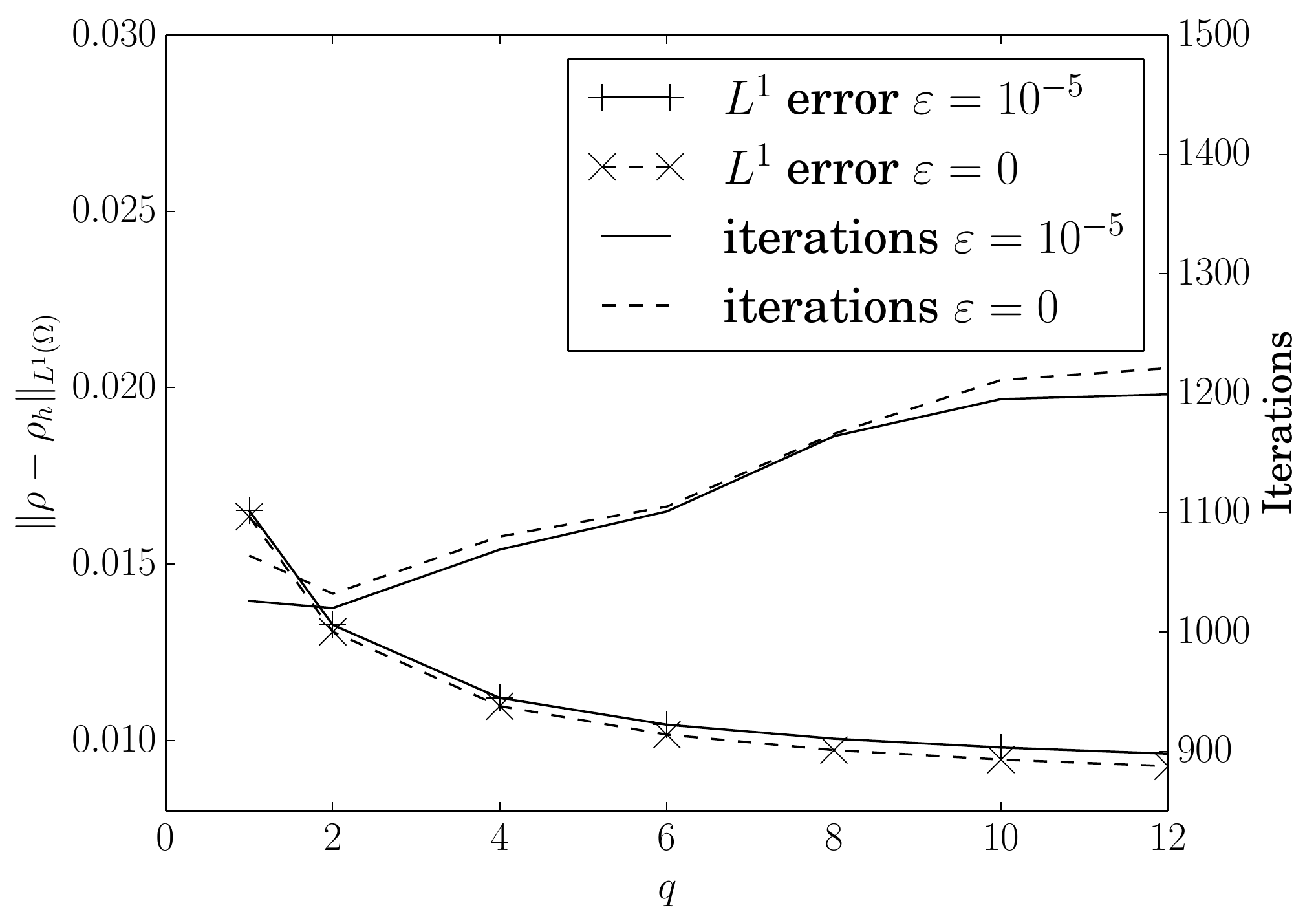}
		\caption{$\sigma=10^{-3}$, $\varepsilon=10^{-5}$, $\gamma=10^{-10}$}
		\label{sfig.sod-shoc-4}
	\end{subfigure}
	\caption{Comparison of $L^1$ error and computational cost (total number of iterations) for different regularization parameters choices at the Sod's shock test.}
	\label{fig.sod-shock}
\end{figure}

\subsection{Scramjet}
Finally, we solve a problem with a supersonic flow that develops a complex shock pattern. This test consists of a $M=3$ channel that narrows along the streamline and has two internal obstacles. In particular, Fig.\ \ref{fig.scramjet-scheme} is an illustration of the domain and Tab.\ \ref{tab.scramjet} lists the coordinates of the points defining the domain. The problem is solved directly to steady state, and two different meshes have been used. The coarsest mesh used has 18476 $\mathcal{Q}_1$ elements and the finest mesh has 63695 $\mathcal{Q}_1$ elements.

\begin{figure}[h]
	\centering
	\includegraphics[width=0.67\textwidth]{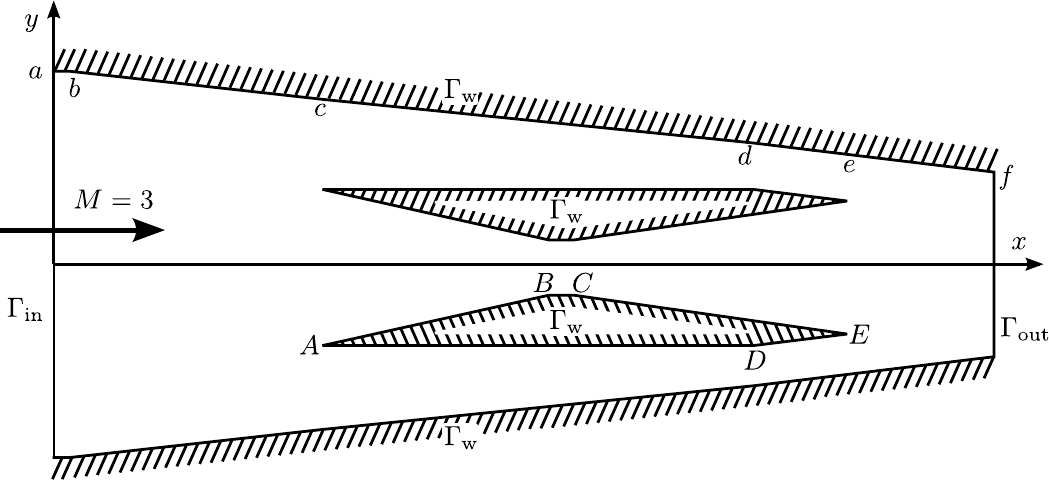}
	\caption{Scramjet test scheme.}
	\label{fig.scramjet-scheme}
\end{figure}
\begin{table}[h]
\centering
\caption{Domain coordinates for the scramjet test.}
\label{tab.scramjet}
\begin{tabular}{ccccccc} \hline
	Wall  & $a$ & $b$ & $c$ &  $d$ &   $e$ & $f$ \\ \hline
	$x_i$ & 0.0 & 0.4 & 4.9 & 12.6 & 14.25 & 16.9 \\
	$y_i$ & 3.5 & 3.5 & 2.9 & 2.12 &  1.92 &  1.7 \\  \hline   \hline 
	\multicolumn{2}{c}{Interior obstacle} & $A$ & $B$ & $C$ & $D$ & $E$ \\ \hline 
    $x_i$ & & 4.9 & 8.9 & 9.4 & 12.6 & 14.25  \\
	$y_i$ & & -1.4 & -0.5 & -0.5 & -1.4 & -1.2 \\ \hline 
\end{tabular}
\end{table}

In order to solve this problem, the hybrid nonlinear solver described in Sect.\ \ref{sec.solver} is used with the help of the continuation scheme. The tolerance for switching from the Picard to Newton linearization is set to $5\cdot 10^{-2}$. The nonlinear convergence criterion for this benchmark is 
$
\frac{\|\Delta(\bunk^{k+1})\|}{\|\bunk^k\|} < 10^{-6}
$. We also set a maximum number of iterations of 500. 
In this test, we use $q=\{2,5\}$, $\gamma=10^{-10}$, $\tilde{\varepsilon}=\{1,10^{-2},10^{-4}\}$, and $\varepsilon^k=\sigma^k\,10^{-2}$.
Even though $\sigma=10^2$ might seem a high value, we recall that it is used in the context of a continuation method. Therefore, the effective value of $\sigma^k$ is lower than 1 for the converged solution. Moreover, the actual value used in \eqref{eq:smax} is computed using the relations in \eqref{eq.scaling}.

Figs.\ \ref{fig.scramjetq5-mach}-\ref{fig.scramjetq5-rho} show, respectively, the Mach and density contours for the fine mesh, $q=5$, $\tilde{\varepsilon}=1$, $\tilde{\varepsilon}=10^2$, and $\gamma=10^{-10}$. The nonlinear convergence history for this configuration is depicted in Figs.\ \ref{sfig.scramjetq5-3}-\ref{sfig.scramjetq5-4}. The obtained values for the Mach number and the density are comparable to those in \cite{Mabuza2018,Kuzmin2012}. The shocks are well resolved. Even when using $q=2$ the shocks are properly resolved and only slightly more smeared than for $q=5$, see Fig.\ \ref{fig.scramjetq2-mach}. If instead, the coarse mesh is used (see Fig.\ \ref{fig.scramjet-coarse-q5-mach}), the solution is more dissipative. However, the scheme is able to capture most of the features present in the solution.

\begin{figure}[h]
	\centering
	\includegraphics[width=0.86\textwidth]{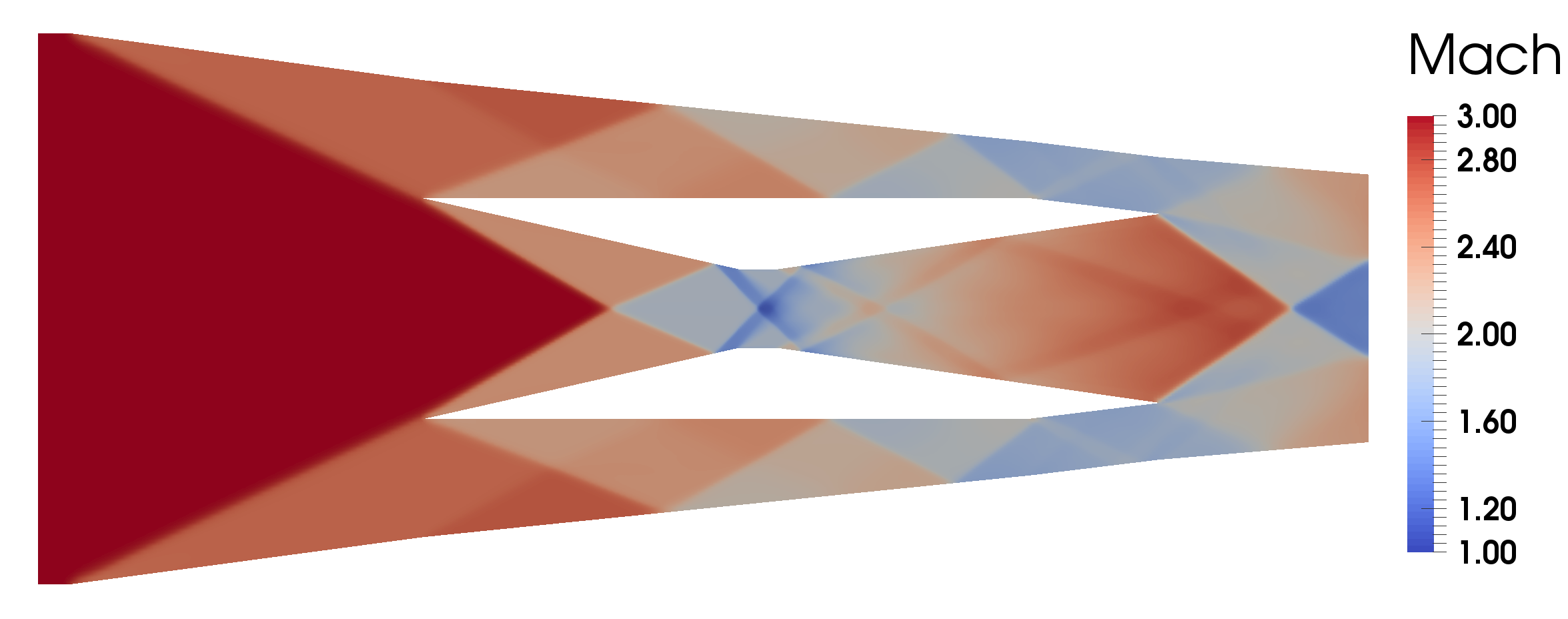}
	\caption{Scramjet Mach contours when a mesh of 63695 $\mathcal{Q}_1$ elements is used, with parameters $q=5$, $\gamma=10^{-10}$, and $\tilde{\varepsilon}=1$.}
	\label{fig.scramjetq5-mach}
\end{figure}
\begin{figure}[!h]
	\centering
	\includegraphics[width=0.86\textwidth]{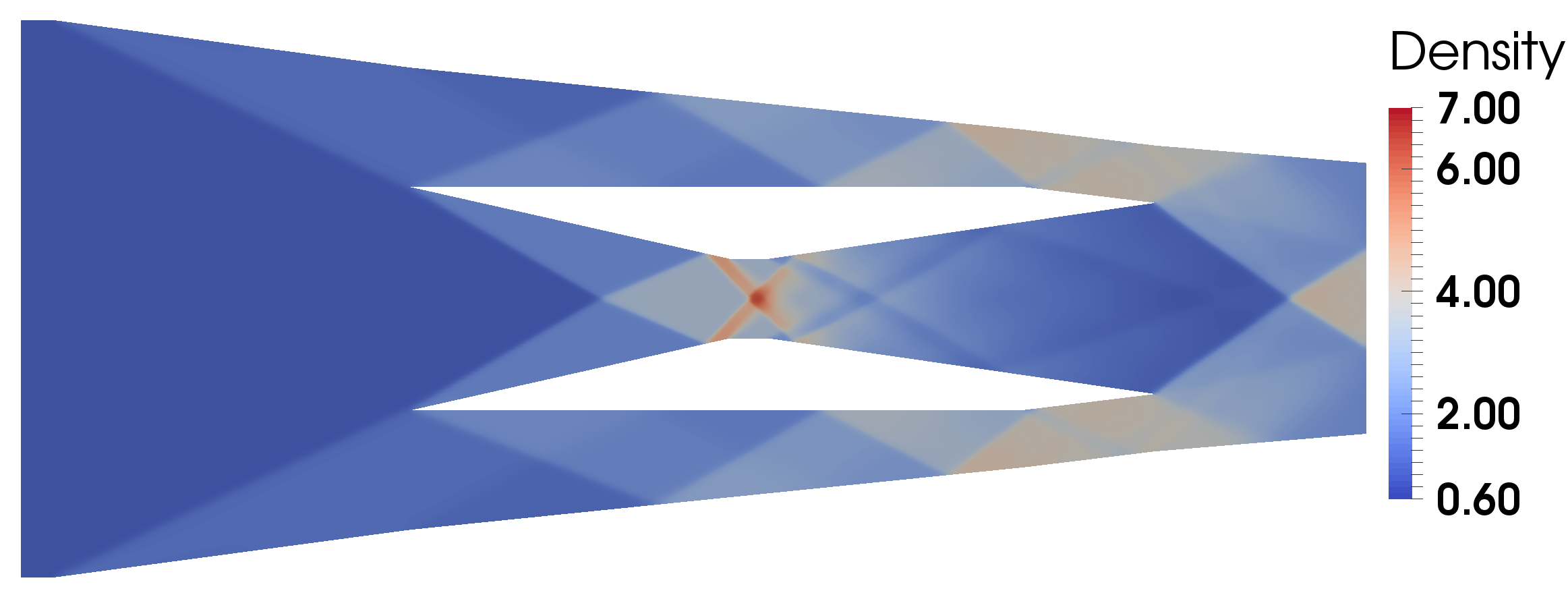}
	\caption{Scramjet density contours when a mesh of 63695 $\mathcal{Q}_1$ elements is used, with parameters $q=5$, $\gamma=10^{-10}$, and $\tilde{\varepsilon}=1$.}
	\label{fig.scramjetq5-rho}
\end{figure}
\begin{figure}[!h]
	\centering
	\includegraphics[width=0.86\textwidth]{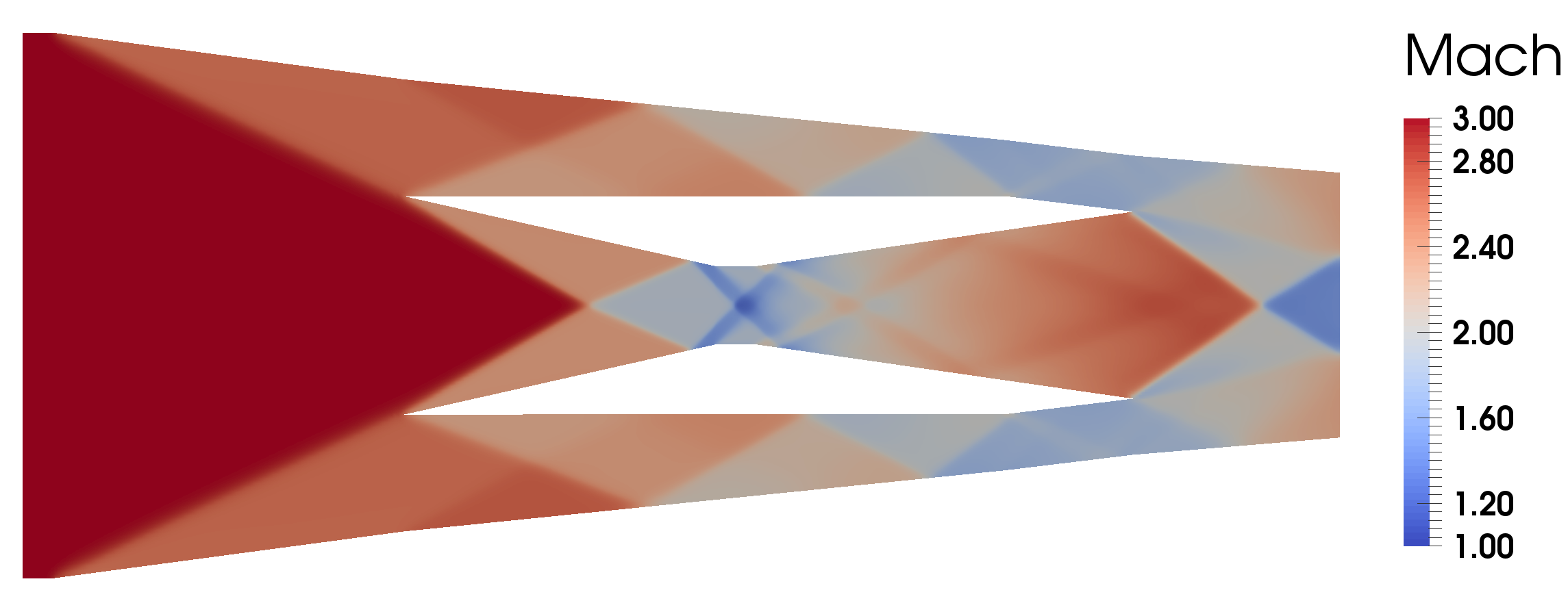}
	\caption{Scramjet Mach contours when a mesh of 63695 $\mathcal{Q}_1$ elements is used, with parameters $q=2$, $\gamma=10^{-10}$, and $\tilde{\varepsilon}=1$.}
	\label{fig.scramjetq2-mach}
\end{figure}
\begin{figure}[!h]
	\centering
	\includegraphics[width=0.86\textwidth]{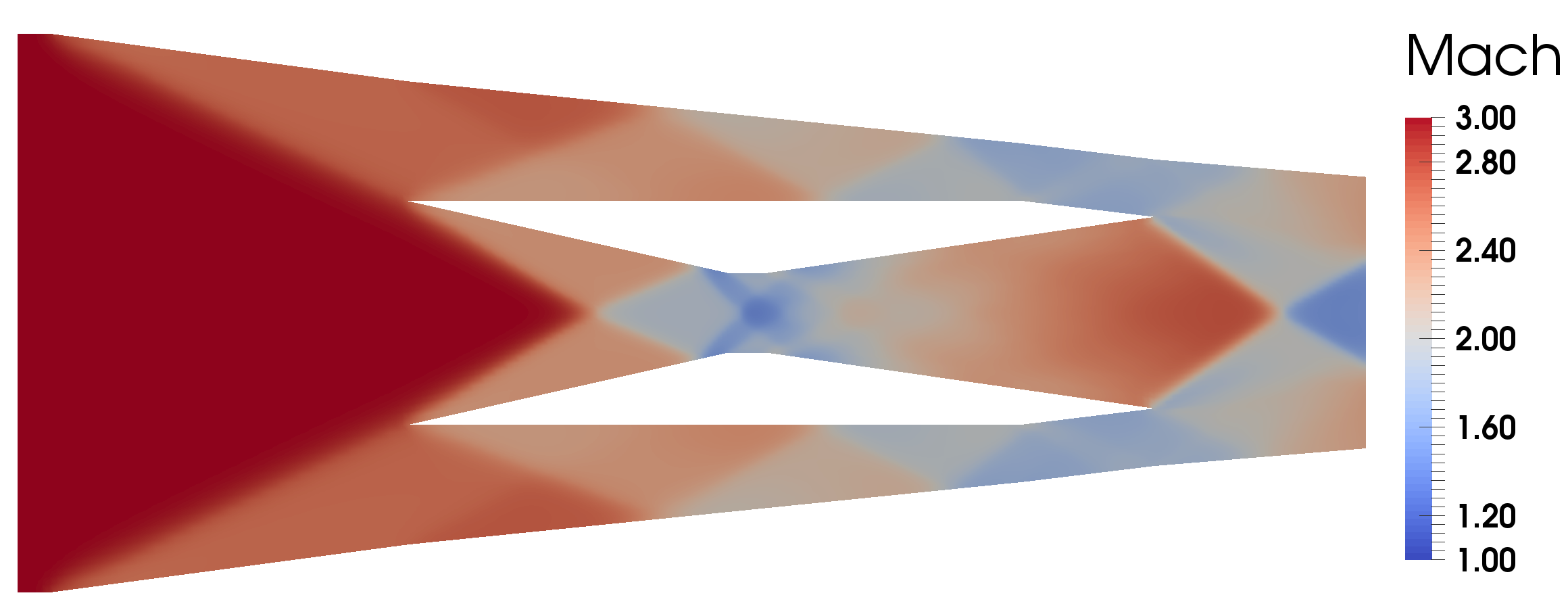}
	\caption{Scramjet Mach contours when a mesh of 18476 $\mathcal{Q}_1$ elements is used, with parameters $q=2$, $\gamma=10^{-10}$, and $\tilde{\varepsilon}=1$.}
	\label{fig.scramjet-coarse-q5-mach}
\end{figure}

Figs.\ \ref{fig.scramjet-coarse}-\ref{fig.scramjet-fine} show the nonlinear convergence history in terms of the relative residual reduction and the relative solution increment between iterations. We can observe that the convergence is not ensured for an arbitrary choice of the regularization parameters. In fact, only the tests that use $\tilde{\varepsilon}=1$ do not diverge for $q=5$ , regardless of the mesh used. Therefore, we can see that increasing the values of the regularization parameters not only improves the convergence, but also the robustness of the method. 

However, it is important to mention that even if we can improve the convergence behavior of these types of methods, this is not enough for directly solving to steady state problems with complex shock patterns. For instance, even if the solution of Fig.\ \ref{fig.scramjet-coarse-q5-mach} seems to be correct, the scheme was unable to converge to the desired tolerance (see Figs.\ \ref{sfig.scramjetq5-1} and \ref{sfig.scramjetq5-2}). However the ability to introduce differentiability into the definition of the shock detector, for robustness and increased nonlinear convergence rates, could be coupled with popular pseudo-time stepping approaches \cite{Kelley1998,Smith2004} to pursue improved methods for complex shock type systems.

\begin{figure}[h]
	\centering
	\begin{subfigure}[b]{0.45\textwidth}
		\includegraphics[width=\textwidth]{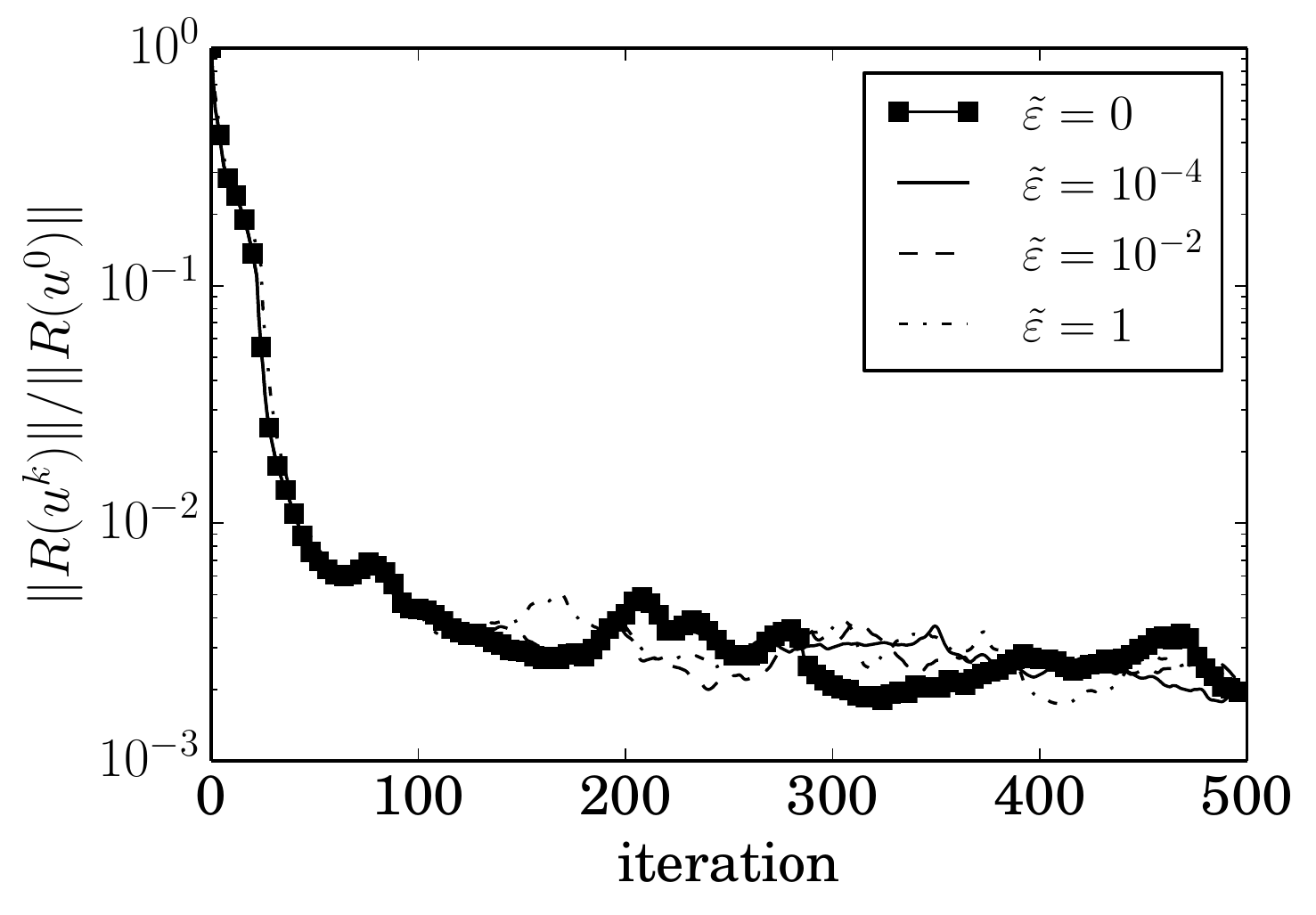}
		\caption{$q=2$.}
		\label{sfig.scramjetq2-1}
	\end{subfigure}
	\begin{subfigure}[b]{0.45\textwidth}
		\includegraphics[width=\textwidth]{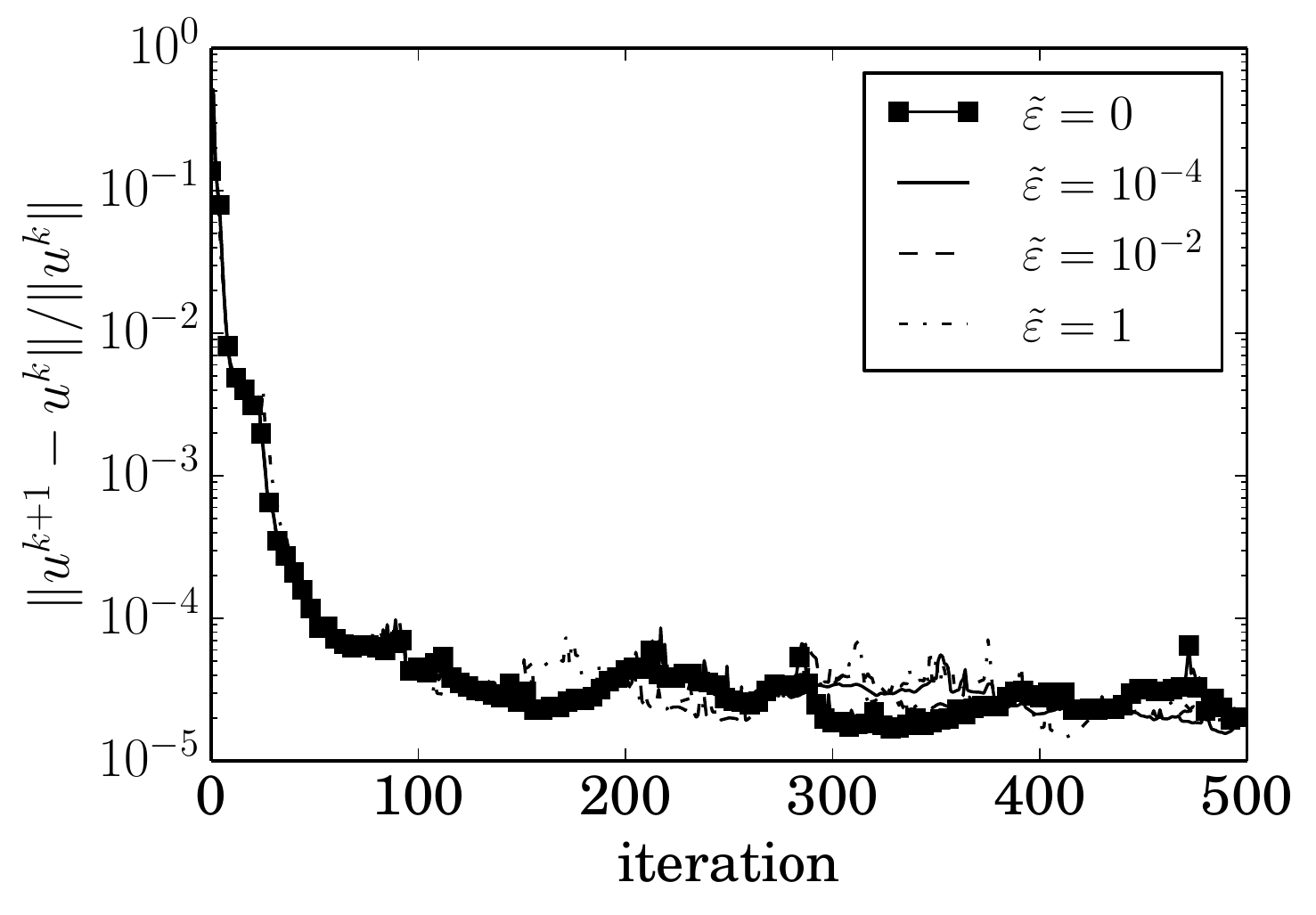}
		\caption{$q=2$.}
		\label{sfig.scramjetq2-2}
	\end{subfigure}
	\begin{subfigure}[b]{0.45\textwidth}
		\includegraphics[width=\textwidth]{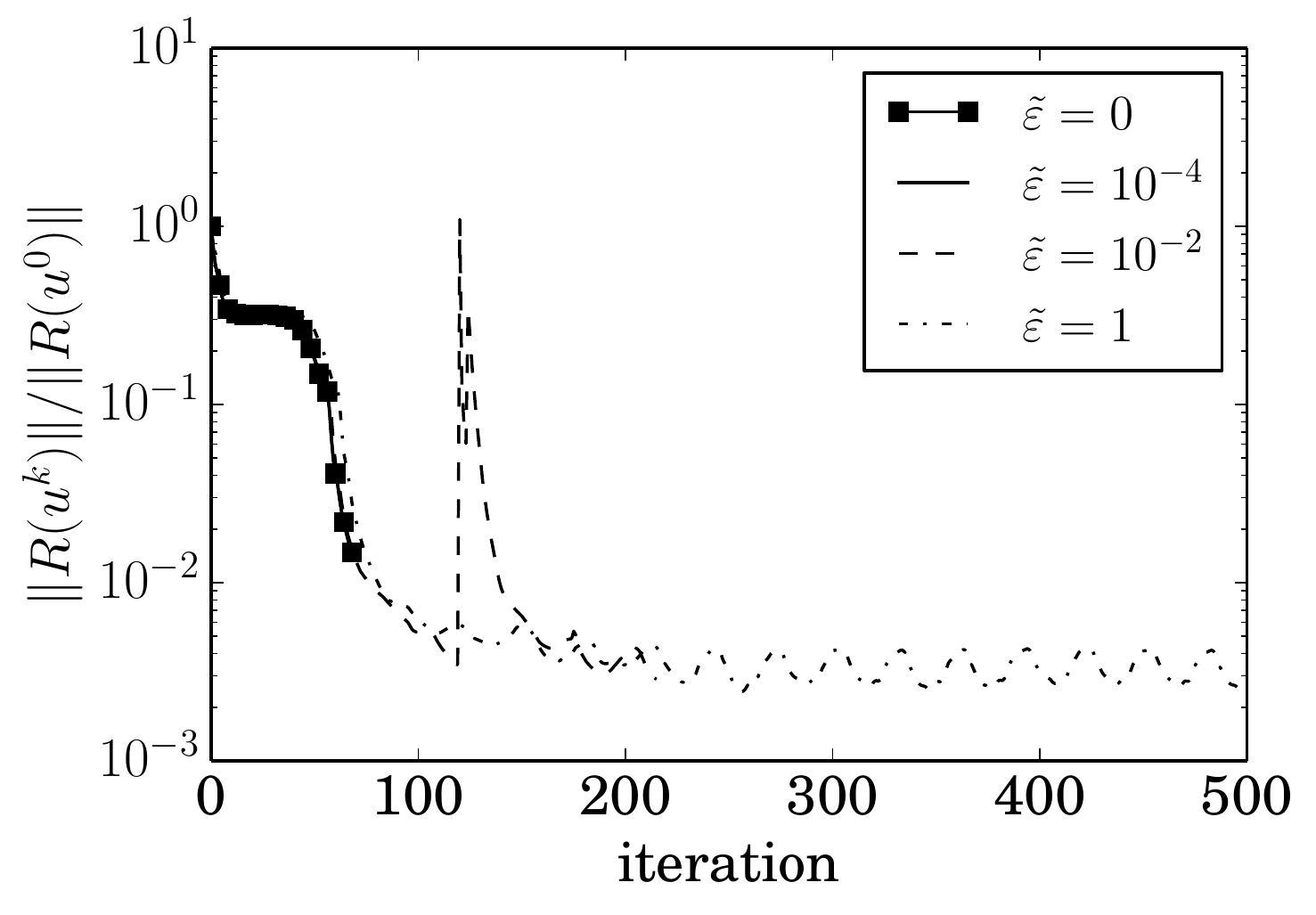}
		\caption{$q=5$.}
		\label{sfig.scramjetq5-1}
	\end{subfigure}
	\begin{subfigure}[b]{0.45\textwidth}
		\includegraphics[width=\textwidth]{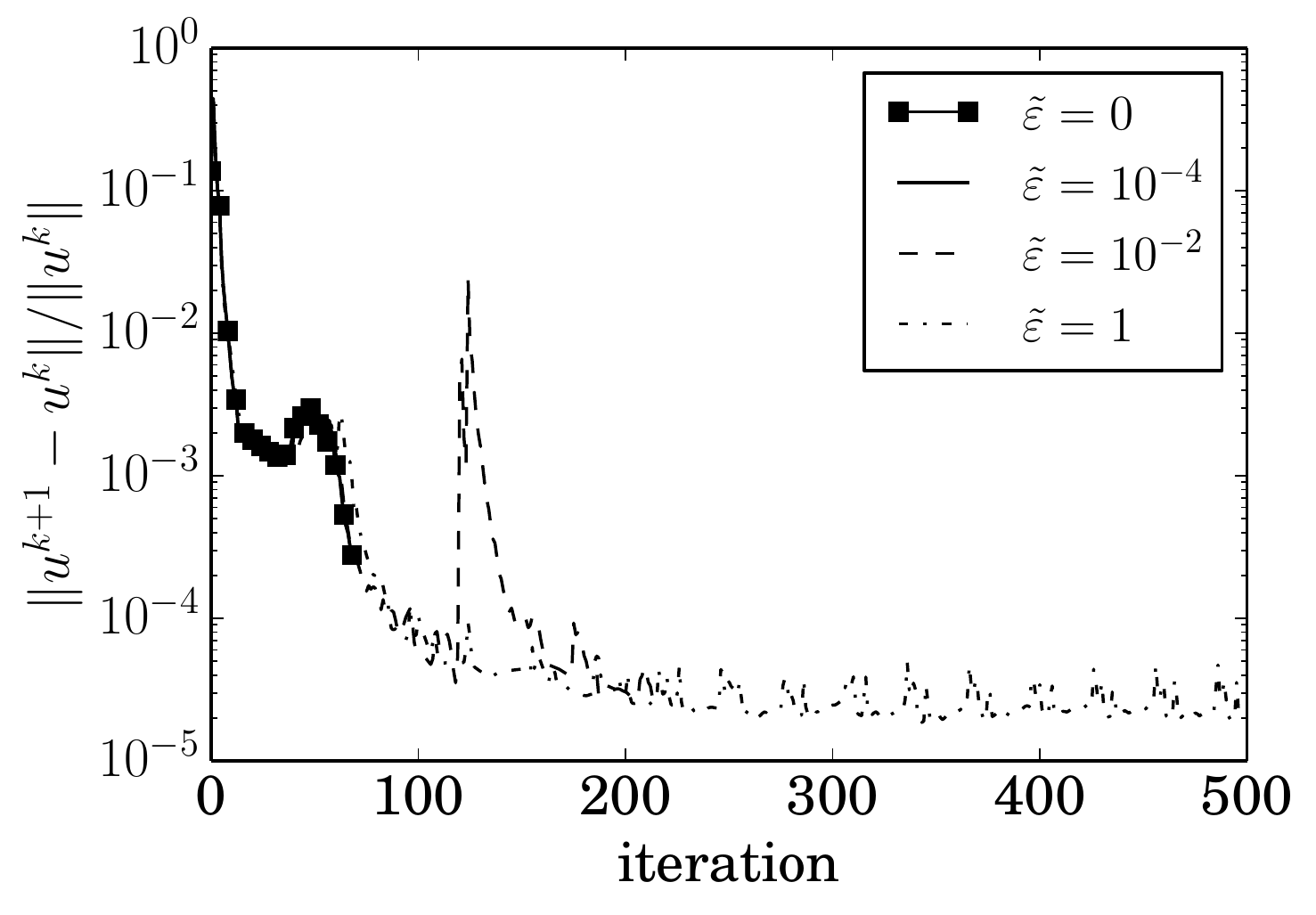}
		\caption{$q=5$.}
		\label{sfig.scramjetq5-2}
	\end{subfigure}
	\caption{Comparison of the convergence behavior for the Scramjet test and different regularization parameters choices. A coarse mesh of 18476 $\mathcal{Q}_1$ elements is used.}
	\label{fig.scramjet-coarse}
\end{figure}

\begin{figure}[h]
	\centering
	\begin{subfigure}[b]{0.45\textwidth}
		\includegraphics[width=\textwidth]{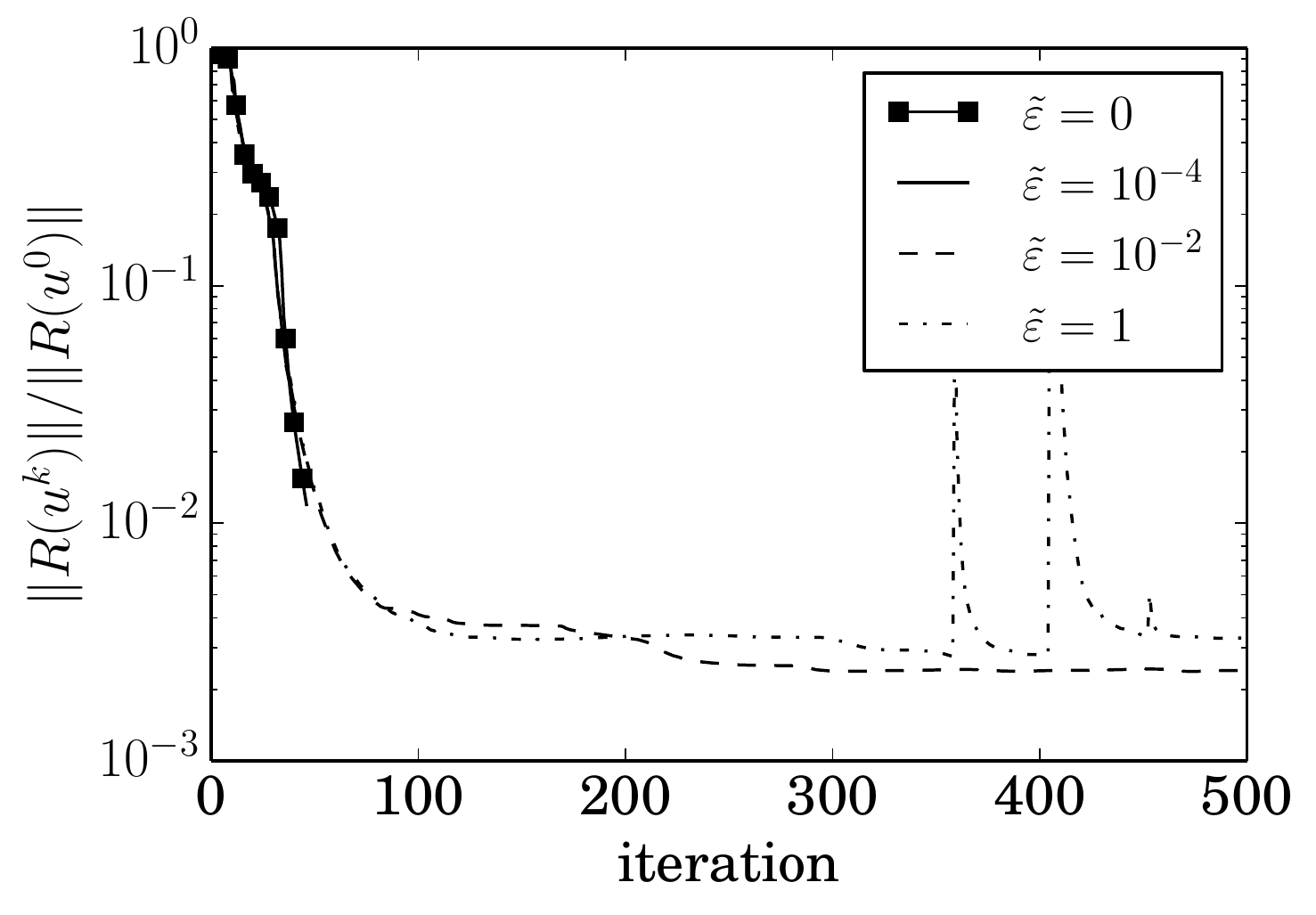}
		\caption{$q=2$.}
		\label{sfig.scramjetq2-3}
	\end{subfigure}
	\begin{subfigure}[b]{0.45\textwidth}
		\includegraphics[width=\textwidth]{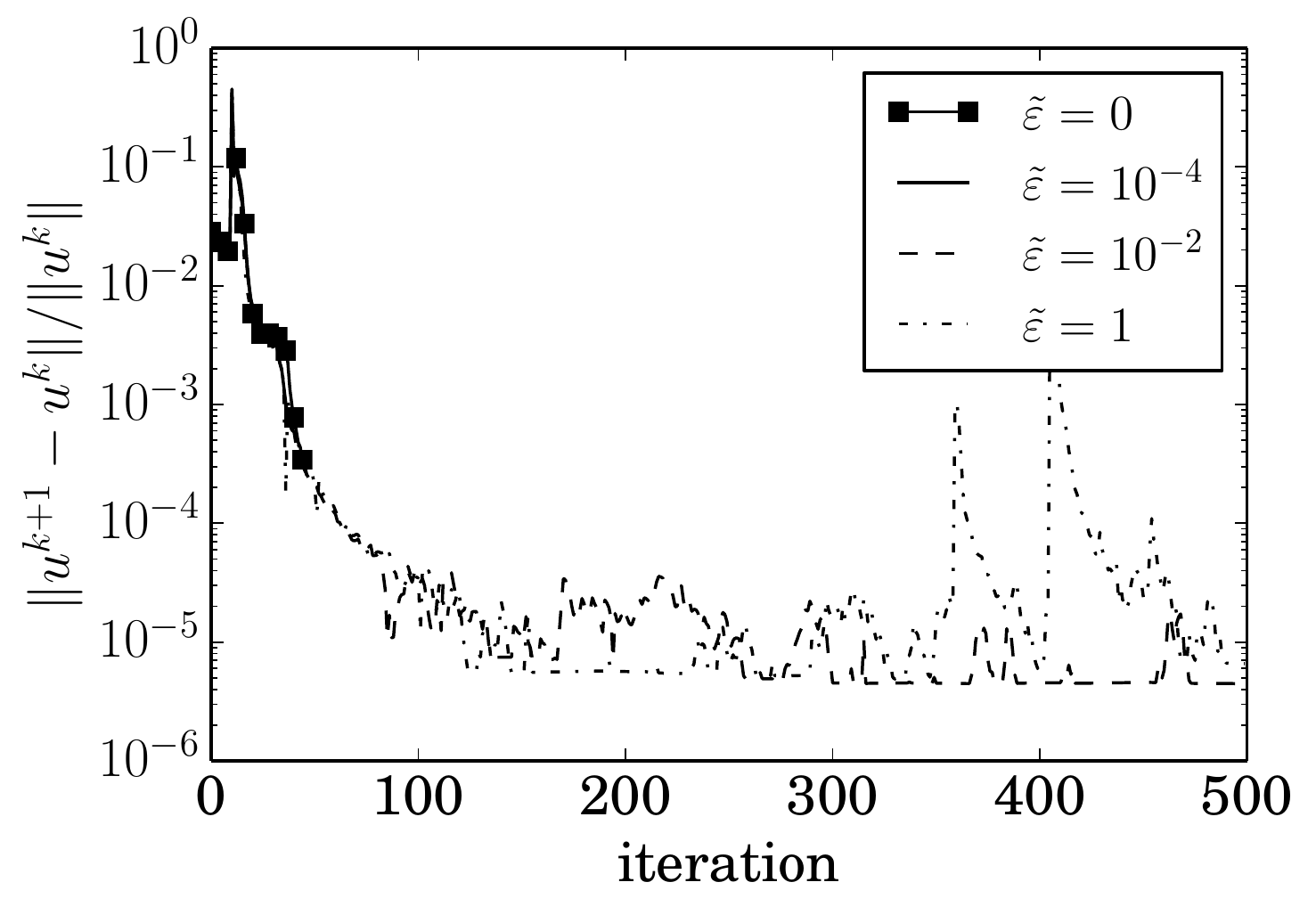}
		\caption{$q=2$.}
		\label{sfig.scramjetq2-4}
	\end{subfigure}
	\begin{subfigure}[b]{0.45\textwidth}
		\includegraphics[width=\textwidth]{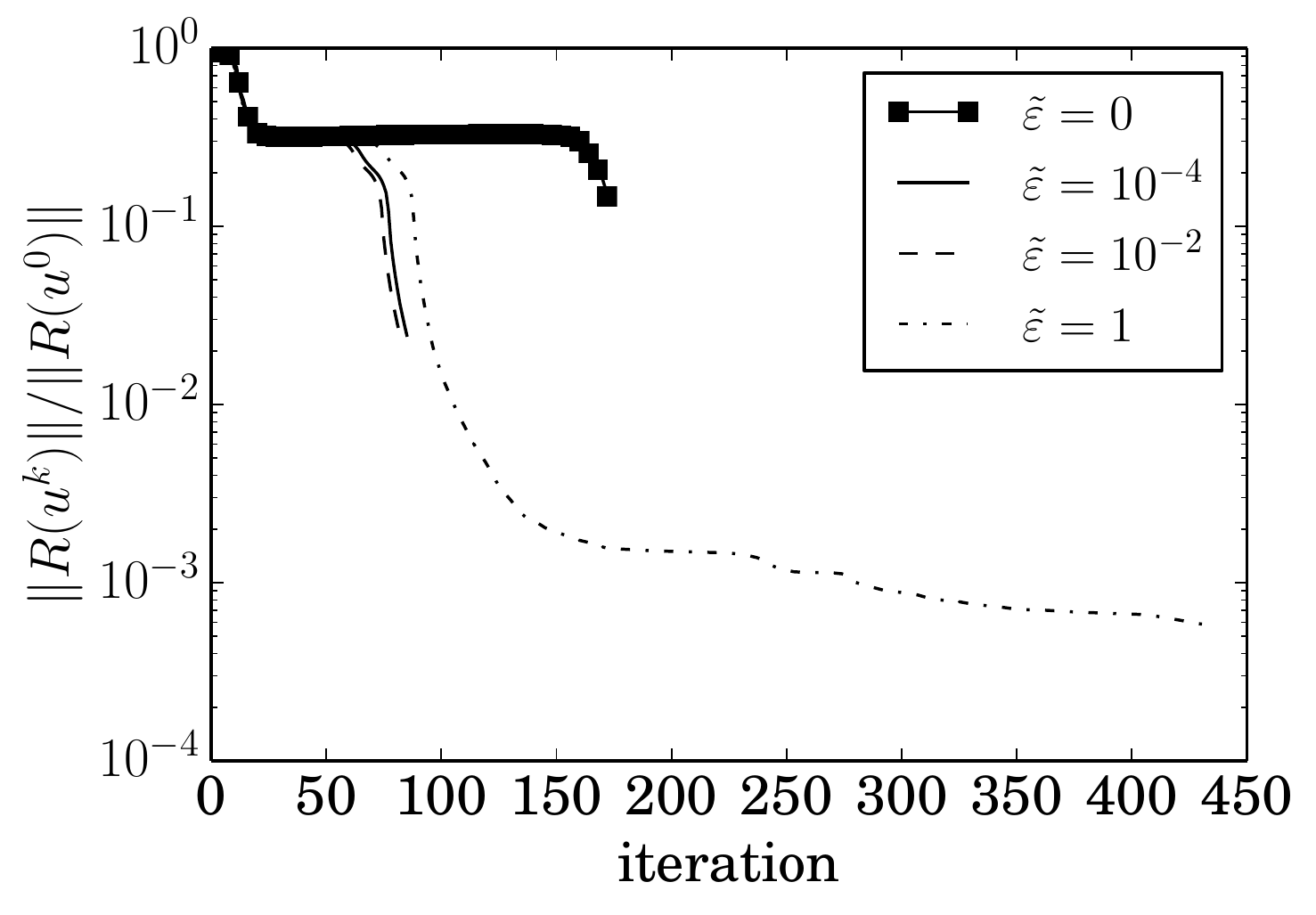}
		\caption{$q=5$.}
		\label{sfig.scramjetq5-3}
	\end{subfigure}
	\begin{subfigure}[b]{0.45\textwidth}
		\includegraphics[width=\textwidth]{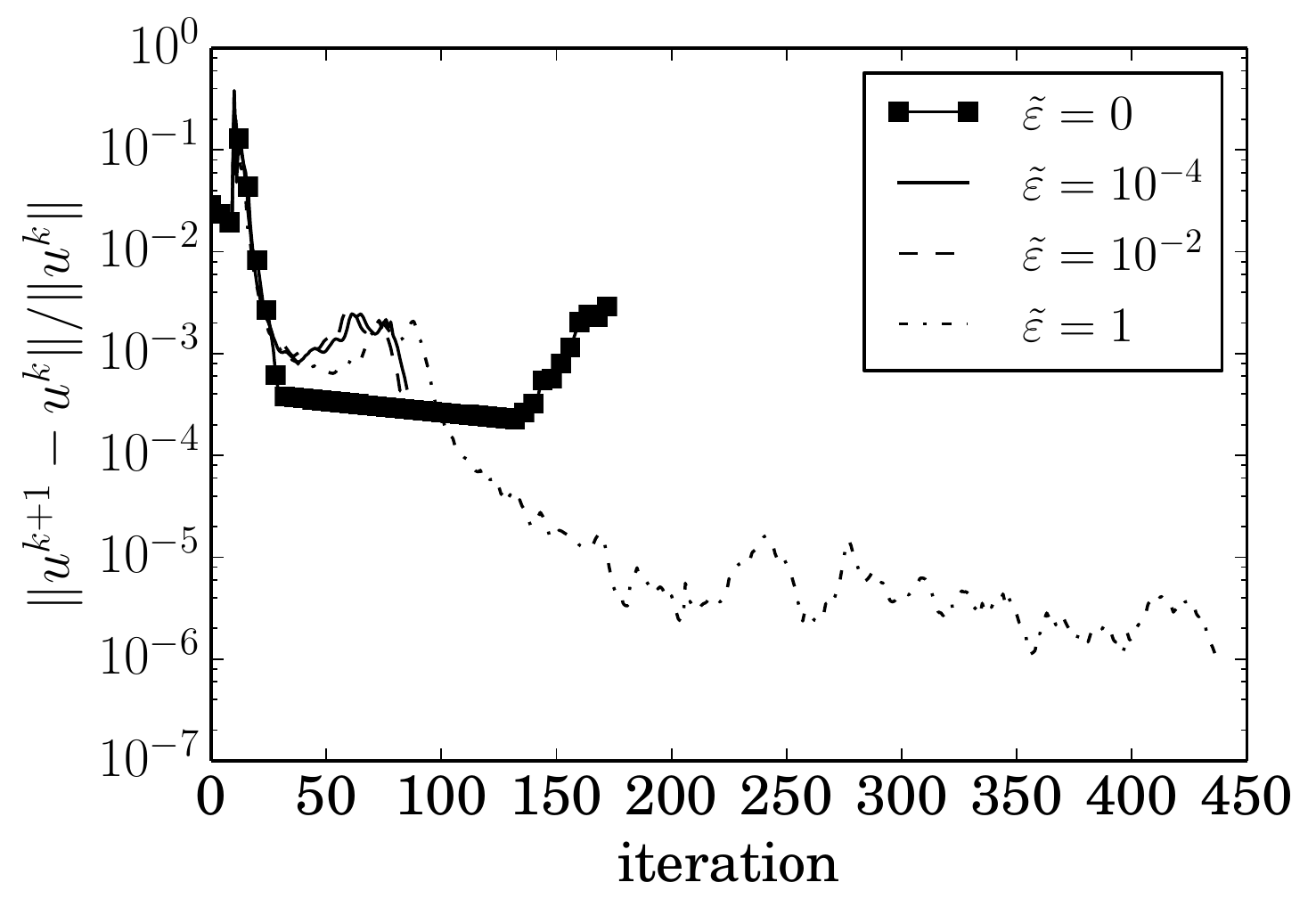}
		\caption{$q=5$.}
		\label{sfig.scramjetq5-4}
	\end{subfigure}
	\caption{Comparison of the convergence behavior for the Scramjet test and different regularization parameters choices. A fine mesh of 63695 $\mathcal{Q}_1$ elements is used.}
	\label{fig.scramjet-fine}
\end{figure}

\section{Conclusions}\label{sec.conclusions}
In this work, a differentiable 
\emph{local bounds preserving} stabilization 
for Euler equations was presented. 
This stabilization is based on the 
combination of a differentiable shock 
detector, a partially lumped mass matrix, 
and Rusanov artificial diffusion. 
The scheme has been successfully 
tested in steady and transient 
benchmark problems. Numerical results 
show that the proposed method exhibits 
good stability properties. Application
of the scheme to steady and transient 
problems with shocks resulted in well 
resolved profiles.
In addition the differentiable shock detector, 
a continuation method for the regularization 
parameters in the differentiable 
stabilization was presented. This was done 
to improve nonlinear convergence. Nonlinear 
convergence of the scheme was then analyzed 
for the differentiable version was compared 
to that of the non-regularized counterpart. 
The differentiable stabilization showed 
better convergence, especially when the 
hybrid Picard--Newton method was used.  
For small steady problems, the scheme is 
able to converge directly to the steady 
state solution without making use of 
pseudo-transient time stepping. However, 
for problems with complex shock patterns 
the scheme only converges to moderate 
tolerances. Numerical results also show that 
differentiability not only improves
nonlinear convergence, but also improves 
the robustness of the method. 
In the case of transient problems, 
some improvement in the computational 
cost is observed. However, since the 
non-differentiable method already 
exhibits good nonlinear convergence, 
there is not much room for improvement. 
Nevertheless, it is possible to show 
that the differentiable stabilization 
can achieve a similar accuracy while 
lowering the computational cost.

\section*{Acknowledgments}
The work of S. Mabuza and J.N. Shadid was 
partially supported by the U.S. Department 
of Energy, Office of Science, Office of 
Applied Scientific Computing Research. 
Sandia National Laboratories is a multi-mission 
laboratory managed and operated by 
National Technology and Engineering 
Solutions of Sandia, LLC., a wholly 
owned subsidiary of Honeywell 
International, Inc., for the U.S. 
Department of Energy's National Nuclear 
Security Administration under contract 
DE-NA0003525. This paper describes 
objective technical results and analysis. 
Any subjective views or opinions that 
might be expressed in the paper do not 
necessarily represent the views of the 
U.S. Department of Energy or the United 
States Government. J. Bonilla gratefully 
acknowledges the support received from 
''la Caixa'' Foundation through
its PhD scholarship program 
(LCF/BQ/DE15/10360010). S. Badia gratefully 
acknowledges the support received 
from the Catalan Government through the 
ICREA Acad\`emia Research Program. 
S. Badia and J. Bonilla also acknowledge 
the financial support to CIMNE via the 
CERCA Programme  / Generalitat de Catalunya.

\bibliographystyle{siam}
\bibliography{art037}

\end{document}